\crefname{exmp}{Example}{Examples}
\newtheoremstyle{mytheoremstyle} % name
    {5pt}                    % Space above
    {5pt}                    % Space below
    {\itshape}                   % Body font
    {\parindent}                           % Indent amount (empty = no indent, \parindent = para indent)
    {\bf}                   % Theorem head font
    {.}                          % Punctuation after theorem head
    {.5em}                       % Space after thm head: " " = normal interword space; \newline = linebreak
    {}  % Theorem head spec (can be left empty, meaning ‘normal’)
\theoremstyle{mytheoremstyle}
\newtheorem{theorem}{Theorem}[section]
\newtheorem{prop}[theorem]{Proposition}
\newtheoremstyle{mytdefintionstyle} % name
    {5pt}                    % Space above
    {5pt}                    % Space below
    {\rm}                   % Body font
    {\parindent}                           % Indent amount
    {\bf}                   % Theorem head font
    {.}                          % Punctuation after theorem head
    {.5em}                       % Space after theorem head
    {}  % Theorem head spec (can be left empty, meaning ‘normal’)
\theoremstyle{remark}
\newtheorem{rmrk}[theorem]{Remark}
\theoremstyle{mytdefintionstyle}
\newtheorem{defn}[theorem]{Definition}
\newtheorem{exmp}[theorem]{Example}
\newtheorem*{conv}{Convention}
\newtheoremstyle{exmp_contd}
    {5pt}                    % Space above
    {5pt}                    % Space below
    {\rm}                   % Body font
    {\parindent}                           % Indent amount
    {\bf}                   % Theorem head font
    {.}                          % Punctuation after theorem head
    {.5em}                       % Space after theorem head
    {\thmname{#1}\ \thmnumber{ #2}\thmnote{#3}\ (continued)}  % Theorem head spec (can be left empty, meaning ‘normal’)
\theoremstyle{exmp_contd}
\newcommand{\GAP}{\textsf{GAP}\xspace}
\newcommand{\HPCGAP}{\textsf{HPC-GAP}\xspace}
\DeclareMathOperator{\Spec}{Spec}
\renewcommand\AA{\mathbb{A}}
\newcommand\A{\mathcal{A}}
\newcommand\F{\mathcal{F}}
\newcommand\bF{\mathbb{F}}
\newcommand{\Q}{\mathbb{Q}}
\newcommand{\CC}{\mathbb{C}}
\newcommand{\Z}{\mathbb{Z}}
\newcommand\N{\mathbb{N}}
\renewcommand\phi{\varphi}
\DeclareMathOperator\Sym{Sym}
\DeclareMathOperator{\Stab}{Stab}
\definecolor{darkgray}{rgb}{0.3,0.3,0.3}
\definecolor{LightGray}{gray}{0.9}
\definecolor{darkgreen}{rgb}{0.008,0.617,0.067}
\definecolor{brown}{rgb}{0.6,0.4,0.2}
\newif\ifjournalversion
\author[M. Barakat]{Mohamed Barakat}
\address{Department of mathematics, University of Siegen, 57068 Siegen, Germany}
\email{\href{mailto:Mohamed Barakat <mohamed.barakat@uni-siegen.de>}{mohamed.barakat@uni-siegen.de}}
\author[R. Behrends]{Reimer Behrends}
\address{Department of mathematics, University of Kaiserslautern, 67653 Kaiserslautern, Germany}
\email{\href{mailto:Reimer Behrends <behrends@gmail.com>}{behrends@gmail.com}}
\author[C. Jefferson]{Christopher Jefferson}
\address{School of Computer Science, University of St Andrews, KY16 9SX St Andrews, United Kingdom }
\email{\href{mailto:Christopher Jefferson <caj21@st-andrews.ac.uk>}{caj21@st-andrews.ac.uk}}
\author[L. K\"uhne]{Lukas K\"uhne}
\address{Einstein Institute of Mathematics, The Hebrew University of Jerusalem, Giv’at Ram, Jerusalem, 91904, Israel}
\address{Max Planck Institute for Mathematics in the Sciences, Inselstr. 22, 04103, Leipzig, Germany}
\email{\href{mailto:Lukas Kuehne<lukas.kuhne@mis.mpg.de>}{lukas.kuhne@mis.mpg.de}}
\author[M. Leuner]{Martin Leuner}
\address{Lehrstuhl B für Mathematik, RWTH Aachen University, Germany}
\email{\href{mailto:Martin Leune <martin.leuner@rwth-aachen.de>}{martin.leuner@rwth-aachen.de}}
\begin{document}

\title[On the generation of rank $3$ simple matroids]{On the generation of rank $3$ simple matroids \\ with an application to Terao's freeness conjecture}
\begin{abstract}
  In this paper we describe a parallel algorithm for generating all non-isomorphic rank $3$ simple matroids with a given multiplicity vector.
  We apply our implementation in the HPC version of \GAP to generate all rank $3$ simple matroids with at most $14$ atoms and an integrally splitting characteristic polynomial.
  We have stored the resulting matroids alongside with various useful invariants in a publicly available, ArangoDB-powered database.
  As a byproduct we show that the smallest divisionally free rank $3$ arrangement which is not inductively free has $14$ hyperplanes and exists in all characteristics distinct from $2$ and $5$.
  Another database query proves that Terao's freeness conjecture is true for rank $3$ arrangements with $14$ hyperplanes in any characteristic.
\end{abstract}

\thanks{This work is a contribution to Project II.1 of SFB-TRR 195 'Symbolic Tools in Mathematics and their Application' funded by Deutsche Forschungsgemeinschaft (DFG).
The fourth author was supported by ERC StG 716424 - CASe, a Minerva Fellowship of the Max Planck Society and the Studienstiftung des deutschen Volkes.}

\keywords{%
rank $3$ simple matroids,
integrally splitting characteristic polynomial,
Terao's freeness conjecture,
recursive iterator,
tree-iterator,
leaf-iterator,
iterator of leaves of rooted tree,
priority queue,
parallel evaluation of recursive iterator,
noSQL database,
ArangoDB%
}
\subjclass[2010]{%
05B35,
52C35,
32S22,
68R05,
68W10%
}
\maketitle

% \tableofcontents

\section{Introduction} \label{sec:Intro}

In computational mathematics one often encounters the problem of scanning (finite but) large sets of certain objects.
Here are two typical scenarios:
\begin{itemize}
  \item Searching for a counter-example of an open conjecture among these objects.
  \item Building a database of such objects with some of their invariants.
\end{itemize}
A database is particularly useful when the questions asked are relational, i.e., involve more than one object (cf.~\Cref{rmrk:ind_div_using_db}).
Recognized patterns and questions which a database answers affirmatively may lead to working hypotheses or even proofs by inspection (cf.~\Cref{thm:terao}).

In any such scenario there is no need to simultaneously hold the entire set in RAM.
It is hence important to quickly \emph{iterate} over such sets in a memory efficient way rather than to enumerate them.

The central idea is to represent each such set $T$ as the set of leaves of a rooted tree $T_\bullet$ (cf.~\Cref{sec:trees}).
In other words, we embed $T$ as the set of leaves in the bigger set of vertices $V(T_\bullet)$.
We then say that $T_\bullet$ \textbf{classifies} $T$.
The internal vertices of the tree $T_\bullet$ are usually of different nature than the elements of $T$.
Their sole purpose is to encode common pre-stages of the leaves.
To iterate over the vertices of the rooted tree $T_\bullet$ we introduce the data structure of a tree-iterator $t$ (cf.~\Cref{defn:recursive_iterator}).

In this article we will describe how to use tree-iterators to classify all nonisomorphic simple rank $3$ matroids with up to $14$ atoms and integrally splitting characteristic polynomial.

A simple matroid $M$ of rank $3$ on $n$ labeled points corresponds to a bipartite graph $G_M$ (cf.~\Cref{rmrk:monoidal_partition}).
We denote by $(m_2, \ldots, m_{n-1})$ the \textbf{multiplicity vector} of $M$ where $m_k$ is the number of coatoms of multiplicity $k$, i.e., the degree in the bipartite graph corresponding to $M$ (cf.~\Cref{def:mult_vector}).
The multiplicity vector determines the characteristic polynomial of $M$:
\begin{equation} \label{eq:chiM} \tag{*}
  \frac{\chi_M(t)}{t-1} = t^2 - (n-1) t + (b_2-(n-1)) \quad \mbox{with} \quad b_2 \coloneqq \sum_{k=2}^{n-1} m_k(k-1) \mbox{.}
\end{equation}
In fact, two simple rank $3$ matroids (or more generally, two paving matroids) have the same multiplicity vector $(m_2, \ldots, m_{n-1})$ iff their Tutte polynomials coincide \cite{Bryl72}.

After extending the notions of inductive and divisional freeness from arrangements to matroids (see Definitions~\ref{def:if} and~\ref{def:df}) we get the following table of cardinalities\footnote{Apart from the number of simple matroids, we were unable to find any of the sequences in the above table in the OEIS database.} of certain classes of nonisomorphic simple rank $3$ matroids.
A matroid is called \emph{Tutte-unique} or \emph{T-unique} if it is determined up to isomorphism by its Tutte polynomial\footnote{The Tutte polynomial of all rank $3$ matroids with an integrally splitting characteristic polynomial and up to $13$ atoms was computed using the $\mathtt{GAP}$ package $\mathtt{alcove}$ \cite{alcove}.} (see~\cite{MN05} for a survey on T-unique matroids).
The content of the table can be reconstructed using the database \cite{matroids_split}.

{\scriptsize
\begin{figure}[H]
\addtolength{\tabcolsep}{0.9pt}
\begin{tabular}{r|rrrrrrrrrrrr}
number of atoms & 3 &4 & 5 & 6 & 7 & 8 & 9 & 10 & 11 & 12 & 13 & 14 \\
\hline
\multicolumn{1}{c}{} & \multicolumn{12}{c}{rank $3$, simple matroids} \\
\hline
\rowcolor{LightGray}
simple matroids & 1 & 2 & 4 & 9 & 23 & 68 & 383 & 5 249 & 232 928 & 28 872 972 & ? & ? \\
\hline
\hline
integrally splitting $\chi_M(t)$ & 1 & 1 & 2 & 3 & 7 & 7 & 17 & 35 & 163 & 867 & 30 724 & 783 280 \\
\hline
\rowcolor{LightGray}
divisionally free & 1 & 1 & 2 & 3 & 6 & 7 & 15 & 33 & 147 & 857 & 28 287 & 781 795 \\
\hline
inductively free & 1 & 1 & 2 & 3 & 6 & 7 & 15 & 33 & 147 & 839 & 27 931 & 750 305\\
\hline
\rowcolor{LightGray}
supersolvable & 1 & 1 & 2 & 3 & 5 & 7 & 11 & 20 & 41 & 118 & 518 & 4 820 \\
\hline
\multicolumn{1}{c}{} & \multicolumn{12}{c}{representable, rank $3$, simple matroids} \\
\hline
rep. \& int.\ split.\ $\chi_M(t)$ & 1 & 1 & 2 & 3 & 7 & 7 & 17 & 30 & 86 & 208 & 999 & 1 574 \\
\hline
\rowcolor{LightGray}
rep. \& divisionally free & 1 & 1 & 2 & 3 & 6 & 7 & 15 & 28 & 75 & 198 & 631 & 1 401 \\
\hline
rep. \& inductively free & 1 & 1 & 2 & 3 & 6 & 7 & 15 & 28 & 75 & 198 & 631  & 1 400\\
\hline
\rowcolor{LightGray}
rep. \& supersolvable & 1 & 1 & 2 & 3 & 5 & 7 & 11 & 20 & 35 & 82 & 223 & 649\\
\hline
%\multicolumn{1}{c}{} & \multicolumn{12}{c}{simplicial, rank $3$, simple matroids} \\
%\hline
%simp.\  integral roots & 1 & 1 & 1 & 2 & 2 & 3 & 2 & 10 & 34 & 297 & 1 856 & 263 261 \\
%\hline
%\rowcolor{LightGray}
%simp.\  divisionally free & 1 & 1 & 1 & 2 & 2 & 3 & 2 & 10 & 28 & 296 & 1 843 & 262 628  \\
%\hline
%simp.\  inductively free & 1 & 1 & 1 & 2 & 2 & 3 & 2 & 10 & 28 & 288 & 1 830 & 250 877  \\
%\hline
%\rowcolor{LightGray}
%simp.\  supersolvable & 1 & 1 & 1 & 2 & 1 & 3 & 2 & 2 & 1 & 9 & 57 & 257 \\
%\hline
%\multicolumn{1}{c}{} & \multicolumn{12}{c}{representable, simplicial, rank $3$, simple matroids} \\
%\hline
%rep.\  simp.\  int.\  roots & 1 & 1 & 1 & 2 & 2 & 3 & 2 & 6 & 9 & 39 & 102 & 118\\
%\hline
%\rowcolor{LightGray}
%rep.\  simp.\  div.\  free & 1 & 1 & 1 & 2 & 2 & 3 & 2 & 6 & 6 & 38 & 101 & 100 \\
%\hline
%rep.\  simp.\  ind.\  free & 1 & 1 & 1 & 2 & 2 & 3 & 2 & 6 & 6 & 38 & 101 & 100 \\
%\hline
%\rowcolor{LightGray}
%rep.\  simp.\  supersolv. & 1 & 1 & 1 & 2 & 1 & 3 & 2 & 2 & 1 & 5 & 15 & 48 \\
%\hline
\multicolumn{1}{c}{} & \multicolumn{12}{c}{Tutte-unique, rank $3$, simple matroids} \\
\hline
T.-u.\  \& int.\ split.\ $\chi_M(t)$ & 1 & 1 & 2 & 3 & 7 & 5 & 11 & 10 & 17 & 17 & 18 & 23 \\
\hline
\rowcolor{LightGray}
T.-u.\  \& divisionally free & 1 & 1 & 2 & 3 & 6 & 5 & 9 & 10 & 14 & 16 & 17 & 21 \\
\hline
T.-u.\  \& inductively free & 1 & 1 & 2 & 3 & 6 & 5 & 9 & 10 & 14 & 16 & 17  & 21 \\
\hline
\rowcolor{LightGray}
T.-u.\  \& supersolvable & 1 & 1 & 2 & 3 & 5 & 5 & 8 & 10 & 12 & 14 & 15 & 19 \\
\hline
\multicolumn{1}{c}{} & \multicolumn{12}{c}{representable, Tutte-unique, rank $3$, simple matroids} \\
\hline
rep.\  \& T.-u.\  \& int.\ split.\ $\chi_M(t)$ & 1 & 1 & 2 & 3 & 7 & 5 & 11 & 10 & 16 & 17 & 17 & 22\\
\hline
\rowcolor{LightGray}
rep.\  \& T.-u.\ \&  div.\  free & 1 & 1 & 2 & 3 & 6 & 5 & 9 & 10 & 13 & 16 & 16 & 20 \\
\hline
rep.\  \& T.-u.\ \&  ind.\  free & 1 & 1 & 2 & 3 & 6 & 5 & 9 & 10 & 13 & 16 & 16 & 20 \\
\hline
\rowcolor{LightGray}
rep.\  \& T.-u.\ \& supersolvable & 1 & 1 & 2 & 3 & 5 & 5 & 8 & 10 & 12 & 14 & 15 & 19 \\
\hline
\end{tabular}
\captionof{table}{\rule{0em}{2em}Cardinalities of certain classes of nonisomorphic simple rank $3$ matroids.
\label{tbl:simple_matroids}}
\addtolength{\tabcolsep}{-4pt}
\end{figure}
}

The total number of simple rank $3$ matroids with $n \leq 12$ (unlabeled) atoms\footnote{\url{http://oeis.org/A058731}} is taken from \cite{MMIB}.
This number also coincides with the number of linear geometries minus one with $n \leq 12 $ (unlabeled) points\footnote{\url{http://oeis.org/A001200}} and has been determined earlier in \cite{BB99}.

%We extracted all rank $3$ simple matroids with $n \leq 11$ from the files on the website \cite{MMIB_DB}, stored them in our database, and computed their characteristic polynomial.
%This confirmed number of simple matroids with integrally splitting characteristic polynomial till $n \leq 11$.

Using our algorithm in \textsf{HPC-GAP} we directly computed all $815 107$ simple rank $3$ matroids with integrally splitting characteristic polynomial with up to $n=14$ atoms and stored them in the database \cite{matroids_split}.
Subsequently, we verified our counting by comparing it against the matroids with integrally splitting characteristic polynomial for $n \leq 11$ in \cite{MMIB_DB}\footnote{We wrote a short program to compute the characteristic polynomial of these matroids as the matroids come without precomputed properties in~\cite{MMIB_DB}.}.
%% There are 4 billion rank 3 simple matroids with 12 atoms
%% and we decided not to read them from \cite{MMIB_DB}.

\subsection{Applications of the Database}

Inspecting the database \cite{matroids_split} enables us to investigate questions like:

\begin{enumerate}
  \item Is being divisionally or inductively free a property determined by the Tutte polynomial? \\
    We answer this question negatively in \Cref{exmp:if_pair}. \label{q:a}
  \item What is the smallest number of atoms of a representable rank $3$ matroid which is divisionally free but not inductively free?\footnote{
  It is already known that such a matroid exists, namely the rank $3$ reflection arrangement $\mathcal{A}(G_{24})$ (with $21$ hyperplanes) of the exceptional complex reflection group $W=G_{24}$ is recursively free \cite{Muc17} but not inductively free \cite{HR15}.
  Hence, an addition of $\mathcal{A}(G_{24})$ is easily seen to be divisionally free but not inductively free.
  Therefore, the sequences of representable divisionally free and inductively free matroids differ at $n=22$ at the latest.} \\
  We answer this question in \Cref{exmp:div_free_not_ind_free}.
  \item Does the database confirm Terao's freeness conjecture for further classes of arrangements? \\
    Indeed, this is \Cref{thm:terao}.
\end{enumerate}
Some of these questions require the construction of \emph{all} matroids with the corresponding number of atoms first, demonstrating the usefulness of a database.

\begin{exmp}\label{exmp:if_pair}
Consider the rank $3$ matroids $M_1$ and $M_2$ of size $11$ given below by the adjacency lists $A_1$ and $A_2$ of their corresponding bipartite graph respectively.
{\tiny
\begin{align*}
A_1 \coloneqq& \{\{1,2,3,4\},\{1,5,6,7\},\{1,8,9,10\},\{2,5,8,11\},\{3,6,9,11\},\{2,6,10\},\{2,7,9\},\{3,5,10\},\{4,5,9\},\{4,7,11\},\\
&\{1,11\},\{3,7\},\{3,8\},\{4,6\},\{4,8\},\{4,10\},\{6,8\},\{7,8\},\{7,10\},\{10,11\}\},\\
A_2 \coloneqq & \{\{1,2,3,4\},\{1,5,6,7\},\{2,5,8,9\},\{3,6,8,10\},\{4,7,9,10\},\{1,8,11\},\{2,7,11\},\{3,9,11\},\{4,6,11\},\{5,10,11\},\\
&\{1,9\},\{1,10\},\{2,6\},\{2,10\},\{3,5\},\{3,7\},\{4,5\},\{4,8\},\{6,9\},\{7,8\}\}.
\end{align*}
}
The matroids $M_1$ and $M_2$ are representable over $\Q$ and $\Q(\sqrt{5})$, respectively.
They admit the following representation matrices, respectively:
{\tiny
\begin{align*}
R_1 \coloneqq  &
\begin{pmatrix*}[r]
  1 & 0 & 1 & 1   & 0 & 1 & 1   & 0& 1  & 1& 0 \\
 0 & 1 & 1 & \frac{1}{2}   & 0 & 0 & 0   & 1& \frac{1}{2}  & 1& 1 \\
 0 & 0 & 0 & 0   & 1 & 1 & \frac{1}{2}   & 1& \frac{1}{2}  & 1& -1  
  \end{pmatrix*}\mbox{,}\\
  R_2\coloneqq  &
  \begin{pmatrix*}[r]
  1& 0& 1& 1& 0& 1& 1& 0& 0& 1& 1 \\
  0& 1& 1& \phi + 1& 0& 0& 0& 1& 1& -\phi& -\phi \\ 
  0& 0& 0& 0& 1& 1& \phi& -1& -\phi+1& \phi+1& \phi  
    \end{pmatrix*}\mbox{,}
\end{align*}
}
where $\phi=\frac{1+\sqrt{5}}{2}$ denotes the golden ratio.
Their multiplicity vectors agree and are given by $(m_k)=(m_2,m_3,m_4)=(10,5,5)$.
Hence, their Tutte polynomials also agree:
{\tiny
\[
T_{M_1}(x,y) = T_{M_2}(x,y)=y^8+3y^7+6y^6+10y^5+15y^4+x^3+5xy^2+21y^3+8x^2+15xy+23y^2+16x+16y\mbox{.}
\]
}
Both $M_1$ and $M_2$ have an integrally splitting characteristic polynomial:
\[
\chi_{M_1}(t)=\chi_{M_2}(t)=(t-1)(t-5)^2\mbox{.}
\]
Using the database we found that $M_1$ is inductively free and hence divisionally free whereas $M_2$ is not even divisionally free.
We checked with \GAP that any representation of $M_2$ is a free arrangement.
Both are not supersolvable.

The database also shows that for rank $3$ matroids this example is minimal with respect to the number of elements.

  Finally, the corresponding question \eqref{q:a} for the stricter notion of supersolvability is confirmed by the database and already proven for rank $3$ matroids in \cite[Proposition 4.2]{Abe17}.
  The proof is formulated for arrangements but works without changes for matroids.
\end{exmp}

\begin{exmp} \label{exmp:div_free_not_ind_free}
  By inspecting the database we found that among the rank $3$ matroids with up to $14$ atoms there is a unique representable matroid $M$ with 14 atoms which is divisionally free but not inductively free.
  It can be represented by the following matrix:
  % d := db.matroids_split.document( "ef53049d834fba1b21f36c365d7f1d46d7fce2f2" );
  \[
    \begin{pmatrix*}[r]
      1 &0 &1 &1 & 1 & 0 &1 &1 & 1 & 0 & 1 & 1 & 0 & 1 \\
      0 &1 &1 &2a-1 &2a &0 &0 &0 & 0 & 1 & -2a+2 &1 & 1 & 1 \\
      0 &0 &0 &0 & 0 & 1 &1 &-2a+1 &-a+1 &a &1 & a &2a-1 &1
    \end{pmatrix*}\mbox{,}
  \]
  where $a$ satisfies the equality $2 a^2 - 2 a + 1=0$ and the inequation $(3 a-1)(a+1)\neq 0$.
  In particular $M$ is representable in any characteristic distinct from $2$ and $5$ \cite{DivFreeNotIndFree}.
  
  Its characteristic polynomial is $\chi_M(t)=(t-1)(t-6)(t-7)$. The restriction $M''$ of $M$ to its third atom (resp. hyperplane) has characteristic polynomial $\chi_M(t)=(t-1)(t-6)$ which shows that any arrangement representing $M$ is divisionally free (cf. \cref{def:df}).
  Furthermore, the Tutte polynomial of $M$ is
  {\tiny
  \[
  y^{11}+3y^{10}+6y^9+10y^8+15y^7+21y^6+28y^5+2xy^3+36y^4+x^3+10xy^2+43y^3+11x^2+24xy+43y^2+30x+30y.
  \]
  }
\end{exmp}

%We hope to shed more light on this splitting phenomenon by providing a database of those rank $3$ matroids up to size $13$ whose characteristic polynomial has only integral roots.

A central notion in the study of hyperplane arrangements is freeness.
A central arrangement of hyperplanes $\A$ is called \emph{free} if the derivation module $D(\A)$ is a free module over the polynomial ring.
An important open question in this field is Terao's conjecture which asserts that the freeness of an arrangement over a field $k$ only depends on its underlying matroid and the characteristic of $k$.
It is known that Terao's conjecture holds for arrangements with up to $12$ hyperplanes in characteristic $0$ (cf. \cite{FV14,ACKN16}).
Recently, Dimca, Ibadula, and Macinic confirmed Terao's conjecture for arrangements in $\CC^3$ with up to 13 hyperplanes \cite{DIM19}.

Inspecting our database we obtain the following result:
\begin{theorem}\label{thm:terao}
	Terao's freeness conjecture is true for rank $3$ arrangements with $14$ hyperplanes in any characteristic.
\end{theorem}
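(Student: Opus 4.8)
The plan is to reduce Terao's conjecture for these arrangements, via Terao's factorization theorem, to an assertion about the $783\,280$ matroids recorded in \cite{matroids_split}, and then to dispose of all but finitely many of them using the combinatorial nature of divisional freeness.

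\textbf{Step 1 (reduction by Terao's factorization theorem).} Let $\A$ be a rank $3$ arrangement with $14$ hyperplanes over a field $k$ and let $M$ be its underlying matroid, a simple rank $3$ matroid on $14$ atoms. If $\A$ is free then, by Terao's factorization theorem, $\chi_\A(t)=\chi_M(t)$ splits into linear factors over $\Z$. Hence any matroid whose characteristic polynomial does not split over $\Z$ admits no free realization over any field, so for such matroids Terao's conjecture holds trivially (freeness is uniformly false). By \eqref{eq:chiM} and \Cref{tbl:simple_matroids} this leaves exactly the $783\,280$ rank $3$ matroids on $14$ atoms with integrally splitting characteristic polynomial, all of which are contained in the database.

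\textbf{Step 2 (divisionally free matroids).} For a rank $3$ matroid $M$ a restriction to an atom has rank $2$ and is realized by a free arrangement; therefore divisional freeness of $M$ in the sense of \Cref{def:df} reduces to the purely combinatorial condition that $\chi_{M''}(t)\mid\chi_M(t)$ for such a restriction $M''$. When this holds, any arrangement $\A$ over any field $k$ with underlying matroid $M$ satisfies the hypotheses of Abe's Division Theorem at the corresponding hyperplane $H$ (namely $\A^H$ free and $\chi_{\A^H}(t)\mid\chi_\A(t)$), so $\A$ is free. Thus for a divisionally free matroid every realization, in every characteristic, is free and Terao's conjecture holds for it; by \Cref{tbl:simple_matroids} this settles $781\,795$ of the matroids from Step 1.

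\textbf{Step 3 (the remaining matroids).} There remain $783\,280-781\,795=1\,485$ matroids with integrally splitting characteristic polynomial that are not divisionally free. Of these, $1\,485-173=1\,312$ are not representable over any field, so no arrangement realizes them and there is nothing to check. For each of the $173$ representable ones we parametrize its realization space — in characteristic $0$ and in each positive characteristic, the answer being uniform for all but the finitely many small primes — and decide freeness of the associated rank $3$ arrangement directly, e.g.\ via Saito's criterion or Yoshinaga's rank $3$ criterion. In every case the outcome depends only on the matroid and on $\ch k$, which is exactly Terao's conjecture for that matroid; compare the treatment of $M_1$ and $M_2$ in \Cref{exmp:if_pair}.

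\textbf{Main obstacle.} Steps 1 and 2 are formal. The substance is Step 3: one must rule out a matroid that is free only in special position, so that freeness would vary within a fixed characteristic — the very phenomenon Terao's conjecture forbids. Carrying this out for all $173$ critical matroids and all characteristics at once is where the completeness of the database, together with explicit linear algebra over the ring of realization parameters, is genuinely used; there is no structural shortcut, and this finite verification is what proves the theorem.
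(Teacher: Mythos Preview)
Your Steps 1 and 2 are fine and parallel the paper's reductions (you use divisional rather than inductive freeness, which is harmless: it leaves $173$ representable matroids instead of the paper's $174$, the difference being precisely the matroid of \Cref{exmp:div_free_not_ind_free}).

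The gap is Step 3. You propose to ``decide freeness of the associated rank $3$ arrangement directly'' for each of the $173$ matroids across its realization space and to observe that the answer depends only on the matroid and on $\ch k$. But that last sentence \emph{is} Terao's conjecture for those matroids: you are asserting the conclusion, not proving it. Checking freeness at one point of a positive-dimensional moduli space does not settle it at another, and your proposal gives no mechanism (and no computation) that rules out freeness varying within a fixed characteristic. Your ``Main obstacle'' paragraph correctly identifies this as the crux, but then does not discharge it.

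The paper avoids this difficulty by inserting two further reductions you omit. First, \Cref{prop:balancedness}: if a matroid is not strongly balanced then freeness of any realization is decided combinatorially, so Terao's conjecture holds for it. This cuts the $174$ down to $64$. Second, for these $64$ the moduli space turns out to be $\Spec \mathbb{F}_5$, a single point; unique representability makes Terao's conjecture vacuous for them, with no freeness computation over a family needed at all. Without the balancedness filter you are left with roughly $110$ matroids that may well have nontrivial moduli spaces, and the paper's own \Cref{rmrk:balanced}-adjacent remark after the proof (deferring the analogous analysis for $n\le 13$ to a separate article) shows that handling such cases is genuinely nontrivial. So your Step~3 is a program, not a proof; the paper's two extra filters are what turn the endgame into a one-line verification.
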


This article is organized as follows:
In \Cref{sec:matroids} we recall the notion of a matroid and introduce several subclasses of simple rank $3$ matroids.
In \Cref{sec:matroid_iter} we discuss the Algorithm used to construct a tree-iterator classifying all nonisomorphic simple rank $3$ matroids with up to $n=14$ atoms having an integrally splitting characteristic polynomial.
In \Cref{sec:representability} we briefly point out how to use Gröbner bases to compute the moduli space of representations (over some unspecified field $\bF$) of a matroid as an affine variety over $\operatorname{Spec} \Z$.
In \Cref{sec:proof} we finally prove \Cref{thm:terao}.
In \Cref{sec:trees} we collect some terminology about rooted trees.
In \Cref{sec:iterators} we define recursive and tree-iterators
and in \Cref{sec:parallel} we introduce algorithms to evaluate them in parallel.
\Cref{sec:WhyHPCGAP} summarizes the merits of the high performance computing (HPC) version of \GAP, which we used to implement the above mentioned algorithms.
We conclude by giving some timings in \Cref{sec:timings} to demonstrate the significance of our parallelized algorithms in the generation of (certain classes) of simple rank $3$ matroids.

\section*{Acknowledgments}

We would like to thank Rudi Pendavingh for pointing us to the paper \cite{DW89} and providing us with his \textsf{SageMath} code to compute the Dress-Wenzel condition for representability of matroids established in loc.~cit.
Using his code we could avoid computing the empty moduli spaces of ca.\ 400.000 nonrepresentable matroids with $14$ atoms.
We would also like to thank Markus Lange-Hegermann and Sebastian Posur for their comments on an earlier version of this paper.
Our thanks goes also to Sebastian Gutsche and Fabian Zickgraf who helped us to setup the VM on which the public database is running.
Last but not least, we are indebted to the anonymous referees for their careful reading and for the valuable suggestions which helped us improve the exposition.

\section{Simple Matroids} \label{sec:matroids}

\subsection{Basic Definitions}

Finite simple matroids have many equivalent descriptions.
For our purposes we prefer the one describing the lattice of flats.
\begin{defn}
  A \textbf{matroid} $M=(E,\F)$ consists of a finite ground set $E$ and a collection $\F$ of subsets of $E$, called \textbf{flats} (of $M$), satisfying the following properties:
  \begin{enumerate}
    \item The ground set $E$ is a flat;
    \item The intersection $F_1\cap F_2$ is a flat, if $F_1$ and $F_2$ are flats;
    \item If $F$ is a flat, then any element in $E \setminus F$ is contained in exactly one flat covering $F$.
  \suspend{enumerate}
  Here, a flat is said to \textbf{cover} another flat $F$ if it is minimal among the flats properly containing $F$.
  A matroid is called \textbf{simple} if
  \resume{enumerate}
    \item it is loopless, i.e., $\emptyset$ is a flat;
    \item it contains no parallel elements, i.e., the singletons are flats, which are called \textbf{atoms}.
  \end{enumerate}
\end{defn}

For a matroid $M=(E,\F)$ and $S\subseteq E$ we denote by $r(S)$ the \textbf{rank of $S$} which is the maximal length of chains of flats in $\F$ all contained in $S$.
The \textbf{rank of the matroid $M$} is defined to be $r(E)$.
A subset $S\subseteq E$ is called \textbf{independent} if $|S|=r(S)$ and otherwise \textbf{dependent}.
A maximal independent set is called a \textbf{basis of $M$}.

\begin{rmrk}[Basis Extension Theorem]\label{rm:basis}
Any independent subset of a matroid can be extended to a basis.
Hence, the cardinality of any basis equals the rank of the matroid.
\end{rmrk}

The flats form a poset\footnote{The poset of flats is a geometric lattice, i.e., a finite atomic semimodular lattice.
Conversely, finite atomic semimodular lattices give rise to matroids.} $\F$ by inclusion.
Dually, a \textbf{coatom} is a maximal element in $\F \setminus \{ E \}$.
An isomorphism between the matroids $(E,\F)$ and $(E',\F')$ is a bijective map $E \to E'$ which induces an isomorphism $\F \to \F'$ of posets.

Originally, matroids were introduced as an abstraction of the notion of linear (in)dependence in linear algebra.

\begin{defn}
  A \textbf{central arrangement} over a field $\bF$ is a finite set $\A$ of $(n-1)$-dimensional subspaces of an $n$-dimensional $\bF$-vector space $V$.
  The \textbf{lattice of flats} $L(\A)$ is the set of intersections of subsets of $\A$, partially ordered by reverse inclusion, where the empty (set-theoretic) intersection is defined as $V$.
  The arrangement $\A$ is called \textbf{essential} if $\{0\} \in L(\A)$.
\end{defn} 
  The pair $(\A, L(\A))$ is a matroid of rank $n - \dim \bigcap_{H \in \A} H$, i.e., of rank $n$ iff $\A$ is essential.
  We call such a pair a \textbf{vector matroid} over $\bF$.
This motivates the following definition:

\begin{defn}
  A matroid is called \textbf{representable over the field $\bF$} if it is isomorphic to a vector matroid over $\bF$.
  A matroid is called \textbf{representable} if it is representable over some field $\bF$.
\end{defn}

The following matroid invariant and its specialization play an important role in our study of simple rank $3$ matroids.
\begin{defn}
The \textbf{Tutte Polynomial} $T_M(x,y)$ of a matroid $M=(E,\F)$ is defined by
\[
T_M(x,y) \coloneqq \sum_{S \in \mathcal{P}(E)} (x-1)^{r(M)-r(S)} (y-1)^{|S|-r(S)}\mbox{.}
\]
A matroid is called \textbf{Tutte-unique}, if it is determined by its Tutte polynomial, i.e. any matroid with the same Tutte polynomial is isomorphic to the given one.
An important evaluation of the Tutte polynomial is the \textbf{characteristic polynomial} \[
\chi_M(t) \coloneqq (-1)^{r(M)}T_M(1-t,0) = \sum_{S \in \mathcal{P}(E)} (-1)^{|S|}t^{r(M)-r(S)}.
\]
\end{defn}

%There is a simpler definition of the monic characteristic polynomial of a \emph{simple} matroid using the \textbf{Möbius function} $\mu: \F \to \Z$, which is defined as follows:
%\begin{itemize}
%  \item $\mu( \emptyset ) = 1$;
%  \item $\sum_{F \supset F' \in \F} \mu(F') = 0$ for all $F \in \F \setminus \{ \emptyset \}$.
%\end{itemize}
%One can then show that
%\[
%  \chi_M(t) = \sum_{F \in \F} \mu(F) t^{r(M) - r(F)} \mbox{.}
%\]

The main application of this article is the enumeration of matroids with an integrally splitting characteristic polynomial.
We will denote the class of rank $r$ matroids with \textbf{integrally splitting characteristic polynomial} by $\mathcal{ISM}_r$.
Such a factorization of $\chi_M(t)$ is often implied by stronger combinatorial or (in the representable case) algebraic/geometric properties.
The only known converse statement is that for a graphic or cographic matroid $M$ induced by a planar graph $G$ the matroid has an integrally splitting characteristic polynomial if and only if $G$ is chordal as shown in~\cite{DK98} for graphic and in~\cite{KR11} for cographic matroids, respectively.
In both cases, the fact that the characteristic polynomial is integrally splitting even implies that $M$ is supersolvable.
However, it is still safe to say that the rather small class of matroids in $\mathcal{ISM}_r$ is not yet well understood when $r \geq 3$.

\subsection{Simple Rank $3$ Matroids}
We will restrict ourselves to the case of simple rank~$3$ matroids in the following definitions.
It is worth pointing out at this point that a rank $3$ matroid is simple if and only if it is \textbf{paving} which in general means that any circuit is at least as large as the rank of the matroid.
The smallest class we will consider is that of supersolvable matroids introduced by Stanley in~\cite{Sta72}.
In the rank $3$ case the definition can be given as follows:
\begin{defn}
A matroid $M=(E,\F)$ of rank $3$ is \textbf{supersolvable} if there exists a flat $F_0\in\F$ of rank $2$ such that every intersection with other flats of rank $2$ is nonempty.
In this case the characteristic polynomial is integrally splitting with roots
\[
  \chi_M(t) = \left(t -1\right)\left(t - (|F_0|-1)\right)\left(t-(|E|-|F_0|)\right) \mbox{.}
\]
Define $\mathcal{SSM}_3$ to be the class of all supersolvable rank $3$ matroids.
\end{defn}

To introduce the next combinatorial classes of matroids we need the notions of deletion and reduced contraction of a matroid with respect to an element $H$ of the ground set $E$.
The deletion just removes the element $H$ from the matroid and for a representable matroid the reduced contractions is the matroid that arises by intersecting all hyperplanes with $H$:

\begin{defn}
Let $M=(E,\F)$ be a matroid and $H\in E$.
Define the \textbf{deletion of $H$} to be the matroid $M' \coloneqq M\setminus H\coloneqq(E',\F')$ where
\begin{align*}
E'  \coloneqq &  E\setminus H \coloneqq E\setminus \{H\},\\
\F' \coloneqq & \F\setminus H \coloneqq\{F\setminus\{H\}\mid F\in\F\}.
\end{align*}
The \textbf{reduced contraction\footnote{This definition mimics the usual notion of restriction for hyperplane arrangements. Note that it differs from the matroid-theoretic contraction since it does not contain loops and parallel elements.} of $H$}
is the matroid $M''\coloneqq M^H \coloneqq (E'',\F'')$ where
\[
\F'' \coloneqq \F^H \coloneqq \{F \in \F \mid \{H\}\subseteq F\},
\]
and its atoms $E''=E^H$ are identified with the flats of rank $1$ in $\F^H$.
If $\{H\}$ is a flat in $M$ then $M^H$ is a simple matroid.
In particular, if $M$ is simple then so are $M\setminus H$ and $M^H$.
\end{defn}

The following two classes stem from the theory of free hyperplane arrangements.
The first one generalizes Terao's notion of inductively free hyperplane arrangements to matroids~\cite{Ter80}.
Intuitively, a matroid $M$ is inductively free if there is a pair of a deletion and reduced contraction with respect to one atom both of which are inductively free and the characteristic polynomial of the reduced contraction divides the one of $M$.
As the start of the induction one defines all rank $2$ matroids and all Boolean matroids to be inductively free.

\begin{defn}\label{def:if}
We define the class $\mathcal{IFM}_3 $ of \textbf{inductively free rank $3$ matroids} to be the smallest class of simple rank $3$ matroids containing
\begin{itemize}
\item the Boolean matroid $M_3 \coloneqq (\{1,2,3\},\mathcal{P}(\{1,2,3\}))$ and
\item $M=(E,\F)$ with $|E|>3$ if there exists an $H\in E$ such that $\chi_{M^H}(t)|\chi_M(t)$ and $M\setminus H\in \mathcal{IFM}_3$.
\end{itemize}
\end{defn}

Recently, Abe introduced a larger class of combinatorially free arrangements in~\cite{Abe16}.
This class is defined in a similar fashion just without the assumption on the deletion of a matroid.

\begin{defn}\label{def:df}
The class $\mathcal{DFM}_3 $ of \textbf{divisionally free rank $3$  matroids} is the smallest class of simple  rank $3$ matroids containing
\begin{itemize}
\item the Boolean matroid $M_3 \coloneqq (\{1,2,3\},\mathcal{P}(\{1,2,3\}))$ and
\item $M=(E,\F)$ with $|E|>3$ if there exists an $H\in E$ such that $\chi_{M^H}(t)|\chi_M(t)$.
\end{itemize}
\end{defn}

\begin{rmrk}\label{rm:strict_inclusions}
The following strict inclusions hold
\[
\mathcal{SSM}_3 \subsetneq \mathcal{IFM}_3 \subsetneq \mathcal{DFM}_3 \subsetneq \mathcal{ISM}_3 \mbox{,}
\]
where the first strict inclusion is shown in~\cite{JT84} and the second inclusion in \cite{Abe16} (for strictness of the inclusion in rank $3$ cf.~\Cref{exmp:div_free_not_ind_free} and for vector matroids of rank at least $4$ cf.~loc.~cit.).
The last inclusion holds by the definition of divisional freeness and since $\chi_M(t)=(t-1)(t-(|E|-1))$ for any simple matroid $M=(E,\F)$ of rank $2$ (for strictness see~\Cref{tbl:simple_matroids}, for example).
\end{rmrk}

\begin{rmrk} \label{rmrk:ind_div_using_db}
  Due to the recursive nature of the definition of inductive freeness, a database containing the simple rank $3$ matroids with up to $n$ atoms is extremely useful when deciding the inductive freeness of those with $n+1$ atoms.
  This is how we determined the subclass $\mathcal{IFM}_3$ in our database.
\end{rmrk}

%An arrangement of hyperplanes over $\mathbb{R}$ is simplicial if all its chambers are open simplicial cones.
%Cuntz and Geis established a combinatorial criterion which coincides with the usual notion of simpliciality for arrangements over $\mathbb{R}$~\cite{CG15}.
%We take this as the definition of a simplicial matroid:
%\begin{defn}
%A simple rank $r$ matroid $M=(E,\F)$ is \textbf{simplicial} if
%\[
%r\chi_M(-1) + 2\sum_{H\in E}\chi_{M^H}(-1)=0.
%\]
%\end{defn}
%
%\begin{rmrk}
%Corollary 2.6 in~\cite{CG15} shows that a simple rank $3$ matroid $M$ of size $n$ is simplicial if and only if
%\[
%3 \sum_{j=2}^{n-1}m_j  = 3 + \sum_{j=2}^{n-1} jm_j\mbox{,}
%\]
%where $(m_k)$ is the multiplicity vector of the coatoms as described in the introduction.
%In particular, for this class of matroid the property of being simplicial is already determined by its Tutte polynomial (cf. the introduction and~\cite{Bryl72}).
%\end{rmrk}

\section{Generating Rank $3$ Matroids with Integrally Splitting Characteristic Polynomials} \label{sec:matroid_iter}

Since we will focus on the rank $3$ case we prefer to describe them as special instances of bipartite graphs.
And as already mentioned in the introduction, the description of tree-iterators $T_\bullet$ generating simple rank $3$ matroids will rely on the language of bipartite graphs.
Our description is a special case of $m$-partitions which describes general paving matroids of rank $m+1$~\cite[Proposition 2.1.24]{Oxley2011}.

\begin{defn}\label{defn:matroidal_bipartitegraph}
  A (proper) \textbf{$2$-partition} of a finite set $E$ is a set $\mathcal{E}$ of nonempty (proper) subsets of $E$, called blocks, such that
  \begin{enumerate}
    \item each block contains at least $2$ elements;
    \item each pair of elements is contained in exactly one block.\label{defn:matroidal_bipartitegraph.b}
  \end{enumerate}
\end{defn}
Condition \eqref{defn:matroidal_bipartitegraph.b} means that $\{ \binom{F}{2} \mid F \in \mathcal{E} \}$ is a partition of $\binom{E}{2} \coloneqq \{ \{a,b\} \subseteq E \mid a \neq b\}$.

\begin{rmrk} \label{rmrk:monoidal_partition}
Let $\mathcal{E}$ be a $2$-partition of $E$.
Then
\begin{itemize}
  \item $\bigcup \mathcal{E} = E$.
  \item $|F \cap F'| \leq 1$ for all $F, F' \in \mathcal{E}$ with $F \neq F'$.
  \item $\sum_{F \in \mathcal{E}} \binom{|F|}{2} = \binom{|E|}{2}$.
  \item The union $E \cup \mathcal{E}$ defines the vertices of a bipartite graph with adjacency given by membership.
    We call bipartite graphs admitting such a description \textbf{matroidal}, if the $2$-partition is proper.
    Connecting, as in \Cref{fig:braid_arr}, the elements of $E$ with an initial element and the blocks with a terminal element we obtain a geometric lattice of flats of a simple rank $3$ matroid.
    Hence, there is a bijective correspondence between simple rank $3$ matroids with ground set $E$ and proper $2$-partitions of $E$.
    Therefore, we will henceforth call the elements of $E$ \textbf{atoms} and those of $\mathcal{E}$ \textbf{coatoms}.
\end{itemize}
Since each pair of atoms is contained in exactly one coatom, the left hand side of the last equation counts the number of pairs of atoms which are joined by the coatoms. This count must be equal to the number of all pairs of atoms which is the right hand side of the equation.
\end{rmrk}

We divide the matroid generation by only considering those with a fixed number of coatoms of each size at a time.
To this end, we define size vectors of coatoms which satisfy the condition in~\Cref{rmrk:monoidal_partition} as multiplicity vectors:
\begin{defn}\label{def:mult_vector}
  For $n \in \N$ we call $(m_k) \coloneqq (m_k)_{k=2,\ldots,n-1} \coloneqq (m_2, \ldots, m_{n-1})$ a \textbf{multiplicity vector} of size $n$ if $\sum_{k=2}^{n-1} m_k \binom{k}{2} = \binom{n}{2}$.

Each $2$-partition $\mathcal{E}$ gives rise to an \textbf{associated multiplicity vector} $m(\mathcal{E}) = (m_k)$ with $m_k = |\{  F \in \mathcal{E} : |F| = k \}|$.
\end{defn}

An example of such a matroidal bipartite graph corresponding to the rank $3$ braid arrangement $\A_3$ is given in \Cref{fig:braid_arr}.
It has the multiplicity vector $(m_2,m_3)=(3,4)$ and the characteristic polynomial $\chi_{\A{_3}}(t)=(t-1)(t-2)(t-3)$.
\begin{center}
\begin{minipage}{0.9\linewidth}
\begin{center}
  \begin{tikzpicture}[scale=0.55]
  \node[label=left:{$\bm{x}$}] at (-10, 0)   (x) {\LARGE$\bullet$};
  \node[label=left:{$\bm{y}$}] at (-6, 0)   (y) {\LARGE$\bullet$};
  \node[label=left:{$\bm{z}$}] at (-2, 0)   (z) {\LARGE$\bullet$};
  \node[label=right:{$\bm{x-y}$}] at (2, 0)   (xy) {\LARGE$\bullet$};
  \node[label=right:{$\bm{x-z}$}] at (6, 0)   (xz) {\LARGE$\bullet$};
  \node[label=right:{$\bm{y-z}$}] at (10, 0)   (yz) {\LARGE$\bullet$};
  
  \node[label=left:{\color{blue}$\bm{3}$}] at (-12, 7)   (c1) {\LARGE$\bullet$};
  \node[label=left:{\color{blue}$\bm{3}$}] at (-8, 7)   (c2) {\LARGE$\bullet$};
  \node[label=left:{\color{blue}$\bm{3}$}] at (-4, 7)   (c3) {\LARGE$\bullet$};
   \node[label=left:{\color{blue}$\bm{3}$}] at (0, 7)   (c4) {\LARGE$\bullet$};
  \node[label=right:{\color{blue}$\bm{2}$}] at (4, 7)   (c5) {\LARGE$\bullet$};
  \node[label=right:{\color{blue}$\bm{2}$}] at (8, 7)   (c6) {\LARGE$\bullet$};
  \node[label=right:{\color{blue}$\bm{2}$}] at (12, 7)   (c7) {\LARGE$\bullet$};
  
   \node at (0, 10)   (0) {$\bullet$};
  \node at (0, -3)   (V) {$\bullet$};
  
 \draw[thick] (x.center) -- (c1.center) -- (y.center);
 \draw[thick] (xy.center) -- (c1.center);
 
 \draw[thick] (x.center) -- (c2.center) -- (z.center);
 \draw[thick] (xz.center) -- (c2.center);
 
 \draw[thick] (y.center) -- (c3.center) -- (z.center);
 \draw[thick] (yz.center) -- (c3.center);
 
 \draw[thick] (xy.center) -- (c4.center) -- (xz.center);
 \draw[thick] (yz.center) -- (c4.center);
 
 \draw[thick] (x.center) -- (c7.center) -- (yz.center);
 \draw[thick] (y.center) -- (c6.center) -- (xz.center);
 \draw[thick] (z.center) -- (c5.center) -- (xy.center);
 
 \draw[thick, dashed] (V.center) -- (x.center);
 \draw[thick, dashed] (V.center) -- (y.center);
 \draw[thick, dashed] (V.center) -- (z.center);
 \draw[thick, dashed] (V.center) -- (xy.center);
 \draw[thick, dashed] (V.center) -- (xz.center);
 \draw[thick, dashed] (V.center) -- (yz.center);
 
 \draw[thick, dashed] (0.center) -- (c1.center);
 \draw[thick, dashed] (0.center) -- (c2.center);
 \draw[thick, dashed] (0.center) -- (c3.center);
 \draw[thick, dashed] (0.center) -- (c4.center);
 \draw[thick, dashed] (0.center) -- (c5.center);
 \draw[thick, dashed] (0.center) -- (c6.center);
 \draw[thick, dashed] (0.center) -- (c7.center);
  \end{tikzpicture}
\end{center}
\captionof{figure}{The lattice of flats of the $\A_3$ braid arrangement with atoms in the bottom row and coatoms in the top row.
The linear forms depicted next to the atoms are a possible representations of the matroid.
The numbers denoted in blue are the multiplicities of the coatoms.}\label{fig:braid_arr}
\end{minipage}
\end{center}

To generate the multiplicity vectors of all simple rank $3$ matroids of fixed size $n$ we can naively iterate over all vectors in $\{0,\ldots,n\}^{n-2}$ satisfying the equation in \Cref{def:mult_vector}.
Additionally, we can assume that any matroid has at least as many coatoms as atoms by a theorem of de Bruijn and Erd\H{o}s~\cite{dBE48}.
Finally, we are only considering those multiplicity vectors $(m_k)$ such that the corresponding characteristic polynomial as in~\eqref{eq:chiM} is integrally splitting.

Let $T$ denote the set of isomorphism classes of simple rank $3$ matroids on $n$ \emph{unlabeled} atoms\footnote{Cf.~(\url{http://oeis.org/A058731}).}.
In what follows we will embed $T$ as the set of leaves in a rooted tree $T_\bullet$.
To make this precise we will use the language of rooted trees and tree-iterators which we summarized in \Cref{sec:trees} and \Cref{sec:iterators}, respectively.

We start by describing a rooted tree $\widetilde{T}_\bullet$ with an action of the symmetric group $\Sym(n)$, such that the quotient tree $T_\bullet \coloneqq \widetilde{T}_\bullet / \Sym(n)$ (in the sense of \Cref{exmp:orbit}) classifies the set $T$.
The set of leaves $\widetilde{T} \coloneqq \lim \widetilde{T}_\bullet$ is then the set of isomorphism classes of simple rank $3$ matroids on $n$ \emph{labeled} atoms.\footnote{Cf.~(\url{http://oeis.org/A058720}, for $k=3$).}

We subdivide this problem by describing a subtree $\widetilde{T}_\bullet^{(m_k)} \subseteq \widetilde{T}_\bullet$ such that $T_\bullet^{(m_k)} \coloneqq \widetilde{T}_\bullet^{(m_k)} / \Sym(n)$ classifies the set $T^{(m_k)}$ of all nonisomorphic simple rank $3$ matroids with given multiplicity vector $(m_k)$.
Our goal is to build a locally uniform tree-iterator $t^{(m_k)}$ having the tree $T_\bullet^{(m_k)}$ as its tree of relevant leaves in the language of \Cref{rmrk:B3}.

In order to describe the larger tree $T_\bullet^{t^{(m_k)}}$ associated to $t^{(m_k)}$ (in the sense of \Cref{rmrk:B2}) we define the theoretically possible pre-stages of $2$-partitions with associated multiplicity vector $(m_k)$ in the following sense:

\begin{defn} \label{defn:admissible}
  We call a set $A = \{ A_i \}$ of subsets of $\{1, \ldots, n\}$ an \textbf{admissible partial $2$-partition} of level $k_0 \in \{1, \ldots, n-2\}$ for a multiplicity vector $(m_k)=(m_2, \ldots, m_{n-1})$ of size $n$ if
  \begin{itemize}
    \item $|\{i : |A_i| = k\}| = m_k$ for all $k$ with $k_0 < k \leq n-1$;
    \item $|A_i \cap A_j| \leq 1$ for all $i < j$.
  \end{itemize}
\end{defn}
The rooted tree $T_\bullet^{t^{(m_k)}}$ can now be described as follows:
We set $T_0^{t^{(m_k)}} \coloneqq \{*\}$ and for $1 \leq i \leq n-2$ let $T_i^{t^{(m_k)}}$ consist of all admissible partial $2$-partitions of level $k_0 = n - i - 1$ modulo the action of $\Sym(n)$.
All maps $T_i^{t^{(m_k)}} \leftarrow T_{i+1}^{t^{(m_k)}}$ are evident and surjective.
The rooted tree $T^{t^{(m_k)}}$ differs from its subtree $T^{(m_k)}$ by the possible dead ends, i.e., by those admissible partial $2$-partitions that cannot be completed to a proper $2$-partition.

To describe the iterator $t^{(m_k)}$ we propose an algorithm that takes as input an admissible partial $2$-partition $A$ of some level~$k_0$ and iterates over all possible extensions to admissible partial $2$-partition of the next nontrivial smaller level $k_1$ with $k_1 < k_0$.
In this computation we only consider lexicographically minimal extensions with respect to the stabilizer of $A$ under the action of the symmetric group $\Sym(n)$ to avoid iterating over isomorphic matroids multiple times.
The details of this procedure are given in Algorithm~\ref{algo:state} (\textbf{IteratorFromState}).

Finally, to build the tree-iterator of all simple rank $3$ matroids with $n$ atoms and multiplicity vector $(m_k)$ (as bipartite graphs) we apply \textbf{IteratorFromState} to the initial state
\[
  s^{(m_k)} \coloneqq \big(n, (m_k), k_0 \coloneqq \max\{ k \mid m_k > 0 \}, A \coloneqq () \big) \mbox{.}
\]

For the proof of \Cref{thm:terao} we are only interested in those matroids with an \textbf{integrally splitting characteristic polynomial} (see \Cref{sec:proof}).
Since the multiplicity vector of a rank $3$ matroid $M$ determines the characteristic polynomial $\chi_M$ by \eqref{eq:chiM} we only consider tree-iterators $t^{(m_k)}$ such that the corresponding characteristic polynomial defined by \eqref{eq:chiM} is integrally splitting.

\bigskip
\begin{algorithm}[H]
\SetKwIF{If}{ElseIf}{Else}{if}{then}{elif}{else}{}%
\DontPrintSemicolon
\SetKwProg{IteratorFromState}{IteratorFromState}{}{}
\SetKwProg{Next}{Next}{}{}
\LinesNotNumbered
\KwIn{
state $s$ consisting of
  \begin{itemize}
    \item a number $n$ of atoms
    \item a multiplicity vector $(m_k)=(m_2, \ldots, m_{n-1})$ of size $n$ \tcp*{cf.~\Cref{sec:Intro}}
    \item an integer $n-1 \geq k_0 \geq 2$ with $m_{k_0} > 0$
    \item an admissible partial $2$-partition $A$ of level $k_0$ for $(m_k)$ \tcp*{cf.~\Cref{defn:admissible}}
  \end{itemize}
}
\KwOut{tree-iterator $\mathtt{iter}$ for which \textbf{Next}($\mathtt{iter}$) returns one of the following:
\begin{itemize}
  \item \textbf{IteratorFromState}(state satisfying above specifications for $k_1$ defined in line~\ref{line:k_1}),
  \item adjacency list, or
  \item $\mathtt{fail}$
\end{itemize}
}
\IteratorFromState(){($s := ( n, (m_k), k_0, A)$, $S$)}{
    \nl  Initialize an iterator $\mathtt{iter}$ and equip it with\;
      \nl  \, \textbullet\, an empty list $APP$ to store the produced admissible partial $2$-partitions,\;
      \nl  \, \textbullet\, an integer $k_1 \coloneqq \max (\{1\} \cup \{k' < k_0 \mid m_{k'} > 0\}) \geq 1$, and\; \label{line:k_1}
      \nl  \, \textbullet\, a function \textbf{Next} as defined in line~\ref{line:Next}\;
      \nl  \label{line:Next} \Next(){($\mathtt{iter}$)}{
       \tcc{
       find the next block of coatoms of multiplicity $k_0$:
       }
        \nl  \uIf{next $A' = \{A'_1, \ldots, A'_{m_{k_0}} \}$ exists with  \tcp*{find $m_{k_0}$ new coatoms}  \label{line:A'}
        \nl  \, \textbullet\, $A \cup A'$ admissible partial $2$-partition of level $k_1$\; \label{line:admissible}
        \tcc{
        the following line guarantees the generation of pairwise nonisomorphic bipartite graphs, the justification will be provided in \Cref{rmrk:sym_red}
        }
        \nl  \, \textbullet\, the lexicographically minimal element $A''$ in the orbit of $A'$ under $\Stab_{\Sym(n)}(A)$ is not contained in $APP$\label{algo:ln:min_image} \; \label{line:sym}
        \tcc{
        Lines \ref{line:A'},\ref{line:admissible},\ref{line:sym} can again be realized by an iterator which returns the next $A'$ or $\mathtt{fail}$ if no such $A'$ exists.
        }
        }{
        \nl  save $A''$ in $APP$\;
        \nl  $A'' := A \cup A''$ \tcp*{augment the current partial $2$-partition}
        \nl  \uIf{$k_1 \geq 2$}{
        \nl  $s' \coloneqq (n, (m_k), k_1, A'')$ \tcp*{define the new state}
        \tcc{return \textbf{IteratorFromState} applied to the new state $s'$}
        \nl  \Return{{\upshape{\textbf{IteratorFromState}}($s'$)}}}
        \nl  \Else{
        \nl \Return{$A''$} \tcp*{return the complete adjacency list}
        }
        }
       \nl  \Else{
         \nl  \Return{$\mathtt{fail}$}
       }
       }
  \Return{$\mathtt{iter}$}
 }
\caption{IteratorFromState \label{algo:state}}
\end{algorithm}

\bigskip

We have implemented Algorithm~\ref{algo:state} as part of the \GAP-package $\mathtt{MatroidGeneration}$ \cite{MatroidGeneration}.
We show in \Cref{sec:parallel} how to evaluate recursive iterators in parallel.
We applied Algorithm~\ref{algo:state} to all multiplicity vectors with an integrally splitting characteristic polynomial and stored the resulting matroids in the database \cite{matroids_split} using the \GAP-package $\mathtt{ArangoDB}$\-$\mathtt{Interface}$ \cite{ArangoDBInterface}.

\bigskip
\begin{rmrk} \label{rmrk:sym_red}
  Line~\ref{algo:ln:min_image} in Algorithm~\ref{algo:state} ensures that the iterator $\mathtt{iter}$ instantiated by the state $s$ does not create two isomorphic adjacency lists $A''$ and $A_2''$ with a common sublist $A$.
  Furthermore, the lexicographically minimal element of the orbit of\footnote{$A \cup A'$ is considered in line~\ref{line:admissible}} $A \cup A'$ under $\Stab_{\Sym(n)}(A)$ is nothing but $A \cup A''$, namely the union of $A$ and the lexicographically minimal element $A''$ of the orbit of $A'$ under $\Stab_{\Sym(n)}(A)$ (considered in line~\ref{line:sym}).
  This is due to the fact that sets in $A'$ are of different cardinality than those in $A$.
\end{rmrk}

\begin{rmrk} \label{rmrk:balanced}
A simple rank $3$ matroid $M$ (of size $n$) is \emph{weakly atom balanced on dependent coatoms}, i.e., every atom is contained in at most $\frac{n-1}{2}$ coatoms of cardinality at least $3$ (these are all dependent in $M$).
\end{rmrk}
\begin{proof}
Let $M$ be a simple rank $3$ matroid of size $n$ and consider a fixed atom $k$.
Let $F_1,\dots,F_\ell$ be the coatoms of size at least $3$ containing the atom $k$.
This implies $|F_i\setminus\{k\}|\ge 2$ for all $1\le i \le \ell$.
Furthermore, by definition of a simple matroid it holds that $F_i\cap F_j=\{k\}$ for all pairs $1\le i < j \le \ell$.
This means that the coatoms $F_1,\dots,F_\ell$ contain each at least two atoms of the $n-1$ atoms which are different from $k$ and moreover these additional atoms are all pairwise distinct.
This immediately yields $\ell \le \frac{n-1}{2}$ which proves the claim.
\end{proof}

\begin{rmrk}
  The computationally difficult part of Algorithm~\ref{algo:state} is to find admissible completions of partial $2$-partitions in Line~\ref{line:A'}.
  Na\"ively, one needs to loop over all subsets of $\{1,\dots,n\}$ of a given size and discard all those which contain a pair of atoms which is already contained in a previous coatom.
  To speed up this part of the computation we use the following methods:
  \begin{itemize}
  	\item Following \Cref{rmrk:balanced} we can discard atoms in computations of multiplicity $k_0\ge 3$ if they are already contained in $\frac{n-1}{2}$ coatoms.
  	\item We can assume that within one level the coatoms are ordered lexicographically.
  	Thus, we discard all subsets of $\{1,\dots,n\}$ that are lexicographically smaller than the last coatom of the same size before calling the search algorithm.
	\item If at any step a transposition $(e_1 \: e_2)$ is in the stabilizer of all previous coatoms we treat the atoms $e_1$ and $e_2$ as equivalent at this step of the algorithm.
	This means that we loop over subsets of $\{1,\dots,n\}$ modulo all such equivalences instead of the entire set $\{1,\dots,n\}$.
	To ensure we do not miss relevant subsets we also need to consider subsets of these representatives of smaller sizes and fill up the resulting subsets to the correct size with atoms that are not contained in any coatom yet.
	We can choose the first unused atoms in the lexicographic order for this matter.
  \end{itemize}
\end{rmrk}

\begin{rmrk} \label{rmrk:min_img}
To calculate lexicographically minimal elements of orbits we use the \texttt{Ferret} and \texttt{Images} packages, by the third author:
  \begin{itemize}
    \item \texttt{Ferret} is a reimplementation of Jeffrey Leon's Partition Backtrack Algorithm \cite{Leo91}, with a number of extensions \cite{JPW18}.
    \item \texttt{Images} provides algorithms which, given a permutation group \(G\) on a set \(\Omega\) and a set \(S \subseteq \Omega\), find the lexicographically minimal image of \(S\) under \(G\), or a canonical image of the orbit of \(S\) under \(G\). \texttt{Images} uses the algorithms of Jefferson et al.~\cite{JJPW18}.
  \end{itemize}
  For this project, both \texttt{Ferret} and \texttt{Images} were extended to be compatible with \textsf{HPC-GAP}.
\end{rmrk}

\begin{rmrk}\label{rmrk:find_matroids}
In the database~\cite{matroids_split} we store lexicographically minimal elements of the list of coatoms computed by the \textsf{GAP} package \texttt{Images}.
This specific form enables the lookup of an arbitrary matroid in the database by computing its uniquely defined minimal image under the action of the symmetric group.
We have also used this lookup procedure to compute the inductive freeness property since this property depends by definition on the inductive freeness of matroids of smaller size (cf. \cref{rmrk:ind_div_using_db}).
\end{rmrk}

\begin{rmrk}\label{rm:not_terminated}
The computations to generate all simple rank $3$ matroids with integrally splitting characteristic polynomial terminated on all $695$ possible multiplicities vectors except for the two vectors $(m_3,m_4,m_5)=(21,3,1)$ and $(m_2,m_3,m_4,m_5,m_6)=(1,23,1,0,1)$.
The latter multiplicity vector is in any case not relevant for Terao's conjecture as any matroid with this multiplicity vector would not be coatom balanced (cf.~\cref{def:balanced}).
In \Cref{prop:no_matroid}, we prove that there are no matroids with one of the above multiplicities vectors.
Hence, these computations which did not terminate do not impose any restrictions on \Cref{thm:terao} or \Cref{tbl:simple_matroids}.
%So far, these two computations have used approximately $2220$ and $721$ CPU Days, respectively.
%Both of these computations have not returned any matroid as expected.
%The computational difficulty with the two given multiplicity vectors is the large number of coatoms of cardinality $3$.
\end{rmrk}

\begin{prop}\label{prop:no_matroid}
Let $v_1$ and $v_2$ be the multiplicity vectors $(m_3,m_4,m_5)=(21,3,1)$ and $(m_2,m_3,m_4,m_5,m_6)=(1,23,1,0,1)$ respectively.
Then, there exists no simple rank $3$ matroid of size $14$ having either $v_1$ or $v_2$ as its associated multiplicity vector.
\end{prop}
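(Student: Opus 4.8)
\noindent\emph{Proof plan.} The idea is to rule out both vectors by a single short parity-and-counting obstruction, with no case analysis on the individual coatoms. Suppose, for contradiction, that a simple rank $3$ matroid $M$ of size $n=14$ existed whose associated multiplicity vector is $v_1$ (respectively $v_2$), and let $\mathcal{E}$ be the corresponding proper $2$-partition of the set of $14$ atoms. For an atom $x$ and an integer $k\ge 2$ write $r_k(x)$ for the number of coatoms of cardinality $k$ that contain $x$. Because each pair of atoms lies in exactly one coatom (\Cref{defn:matroidal_bipartitegraph}), and a coatom $F\ni x$ pairs $x$ with precisely $|F|-1$ of the other $n-1=13$ atoms, summing over all coatoms through $x$ yields the identity
\[
  \sum_{k\ge 2}(k-1)\,r_k(x)\;=\;13\qquad\text{for every atom }x.
\]
Reducing modulo $2$ kills every term with $k$ odd, leaving
\[
  \sum_{k\text{ even}}r_k(x)\;\equiv\;1\pmod 2\qquad\text{for every atom }x.
\]
I will also use the dual count $\sum_{x}r_k(x)=k\,m_k$, which holds because each of the $m_k$ coatoms of size $k$ accounts for exactly $k$ atom--coatom incidences.

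For $v_1=(m_3,m_4,m_5)=(21,3,1)$ the only even $k$ with $m_k>0$ is $k=4$, so the displayed congruence forces $r_4(x)$ to be odd, hence $r_4(x)\ge 1$, for each of the $14$ atoms. Summing gives $14\le\sum_x r_4(x)=4m_4=12$, a contradiction. For $v_2=(m_2,m_3,m_4,m_5,m_6)=(1,23,1,0,1)$ the even sizes occurring are $k\in\{2,4,6\}$, so the congruence reads $r_2(x)+r_4(x)+r_6(x)\equiv 1\pmod 2$, whence $r_2(x)+r_4(x)+r_6(x)\ge 1$ for every atom. Summing over the $14$ atoms and using the dual count for $k=2,4,6$ gives $14\le\sum_x\bigl(r_2(x)+r_4(x)+r_6(x)\bigr)=2m_2+4m_4+6m_6=2+4+6=12$, again impossible. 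In both cases no such matroid exists.

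I do not anticipate any real obstacle: the argument rests entirely on the two elementary bookkeeping identities $\sum_{k\ge 2}(k-1)r_k(x)=n-1$ and $\sum_x r_k(x)=k\,m_k$ together with one parity reduction, and the arithmetic $4m_4=12<14$ and $2m_2+4m_4+6m_6=12<14$ is immediate. The only thing needing a little attention is reading off correctly, for each $v_i$, which even block sizes actually occur. As an alternative one could, for each atom, solve the linear Diophantine equation $\sum_{k\ge 2}(k-1)r_k(x)=13$ for the possible local degree vectors and then sum the resulting inequalities against $\sum_x r_4(x)=4m_4$ (and, for $v_2$, against the corresponding sums); this is the same computation with the parity step replaced by an explicit enumeration, but the parity form is shorter.
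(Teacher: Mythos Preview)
Your proof is correct, and the parity idea at its heart is the same as the one in the paper: a coatom of odd size through an atom $x$ contributes an even amount to the count of ``atoms already paired with $x$'', so only the even-size blocks matter modulo $2$. The paper phrases this as the parity of the \emph{deficiency} $d_A(x)=13-\sum_k(k-1)r_k^A(x)$ being unchanged when one adds a coatom of size $3$.

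Where your write-up differs is in the packaging, and the difference is genuinely useful. The paper proceeds constructively: for $v_1$ it fixes the size-$5$ coatom, observes that some particular atom (e.g.\ atom $4$) avoids all size-$4$ coatoms, and computes its deficiency; for $v_2$ it actually runs the generation algorithm to list the two admissible partial $2$-partitions with all blocks of size $>3$ and then checks the deficiency parities case by case. Your argument works directly with the completed matroid, using the global identity $\sum_x r_k(x)=k\,m_k$ to turn the per-atom parity constraint into a single inequality $14\le \sum_{k\text{ even}} k\,m_k$, which is $12$ for both $v_1$ and $v_2$. This eliminates the case analysis entirely and makes the two vectors fall to literally the same one-line computation. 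The paper's formulation, on the other hand, is closer to the algorithmic viewpoint of the article and illustrates how the deficiency heuristic interacts with the actual tree-iterator; but as a standalone proof of the proposition yours is shorter and more uniform.
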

\begin{proof}
Given an admissible partial $2$-partition $A$ and an atom $e$ we denote by $d_A(e)$ the \emph{deficiency} of $e$ in $A$ which is the number of atoms that are not contained in a common coatom with $e$ in $A$.
For both multiplicity vectors we investigate the admissible partial $2$-partitions that contain all coatoms of size greater than $3$.
%In this way we obtain the admissible partial $2$-partitions $A_{1,1}\dots A_{1,22}$ and $A_{2,1},A_{2,2}$ for $v_1$ and $v_2$ respectively which are given in \Cref{tbl:deficiencies} below.
We will argue based on the parity of their deficiencies that all of them cannot be completed to a matroid with the remaining coatoms of size $3$ (and one coatom of size $2$ in the case of $v_2$) which completes the proof.

%First consider the case of the multiplicity vector $v_1$.
%In this case we can only add coatoms of size $3$ to the admissible partial $2$-partitions to obtain a matroid.
Consider a step in which we add the coatom $C\coloneqq \{e_1,e_2,e_3\}$ to a list of coatoms $A$ and obtain a new list $A'$.
Then we have $d_{A'}(e_i)=d_A(e_i)-2$ for $1\le i\le 3$ and the remaining deficiencies remain constant.
In particular, the parity of all deficiencies is constant in this step.
%Since a matroid $M$ has $d_M(e)=0$ for all atoms $e$, the deficiencies $d_{A}(e)$ need to be even for all atoms $e$ and all lists of coatoms $A$ lacking only coatoms of size at most $3$.

In the case of the multiplicity vector $v_1$ we can without loss of generality assume that all admissible partial $2$-partitions with all coatoms  of size greater than $3$ contain the coatom $\left[1,\dots,5\right]$ and the atom $4$ is not contained in any coatom of size $4$.
Thus, we have $d_A(4)=13-4=9$ for all such lists $A$.
Since this number is odd but the deficiency of any matroid is~$0$ the above discussion proves that all such partial list of coatoms of $v_1$ can not be completed to a matroid.

To prove the remaining statement regarding the  multiplicity vector $v_2$, we start our parallel matroid generation algorithm but terminate after completing all levels of size greater than $3$.
We obtain the two partial admissible $2$-partitions
\[
A_1 \coloneqq	[ [ 1, 2, 3, 4, 5, 6 ], [ 1, 7, 8, 9 ]], \quad A_2 \coloneqq [ [ 1, 2, 3, 4, 5, 6 ], [ 7, 8, 9, 10 ] ].
\]
Now, we need to add coatoms of size $3$ and exactly one coatom of size $2$ to the admissible partial $2$-partitions $A_{1},A_{2}$.
An analogous argument as in the first case shows that the number of atoms with odd deficiency of the lists $A_{1},A_{2}$ must be exactly two.
Computing deficiencies of the atoms in the lists $A_1$ and $A_2$ yields that $1$ is the only atom with an odd deficiency in $A_1$ whereas all atoms in $A_2$ have an even deficiency.
Thus, there exists no matroid with multiplicity vector~$v_2$.
\end{proof}

\section{How to Decide Representability of a Matroid?} \label{sec:representability}

The Basis Extension Theorem for matroids (cf.~\Cref{rm:basis}) implies that the (possibly empty) space $\mathcal{R}(M)$ of \emph{all} representations (over some unspecified field $\bF$) of a matroid $M = (E, \F)$ is an \emph{affine variety}, namely an affine subvariety $V(I') \subseteq \AA_\Z^{rn + 1}$, where $r$ is the rank of $M$ and $n$ its number of atoms.

More precisely, let $\AA^{rn+1}_\Z \coloneqq \Spec R[d]$, where $R \coloneqq \Z[ a_{ij} \mid i = 1, \ldots, r,\, j = 1, \ldots, n ]$ and $d$ a further indeterminate.
To describe the ideal $I'$ set $A \coloneqq (a_{ij}) \in R^{r \times n}$.
For a subset $S \subseteq E$ denote by $A_S$ the submatrix of $A$ with columns in $S$.
Further, let $\mathcal{B}(M) = \{ B_1, \ldots, B_b \}$ be the set of bases of $M$.
Then
\[
  I' = \left\langle \det(A_D) \mid D \subseteq E  \mbox{ dependent}, |D|=r  \right\rangle + \left\langle 1 -d \prod_{B \in \mathcal{B}(M)} \det(A_B) \right\rangle  \unlhd R[d] \mbox{.}
\]
It follows that $M$ is representable (over some field $\bF$) if and only if $1 \notin I'$.
This ideal membership problem can be decided by computing a Gröbner basis of $I'$.
This is basically the algorithm suggested in \cite{Oxley2011}.

If the ideal $I'$ is a maximal ideal in $R[d]$ the moduli space of representations $\Spec R[d]/I'$ of the matroid $M$ contains only one point.
In this case, the matroid $M$ has a unique representation (up to equivalence) and we call $M$ \emph{uniquely representable over $\Spec \Z$}.

However, it is computationally more efficient to represent $\mathcal{R}(M)$ as a
 quasi-affine set $V(I) \setminus V(J) \subseteq \AA_\Z^{rn} = \Spec R$, where $J$ is a principal ideal.
Denote by $J_S \coloneqq \langle \det( A_S ) \rangle$ the principal ideal generated by the maximal minor corresponding to $S$, provided $|S| = r$.
Then
\begin{align*}
  I &= \sum \{ J_D \mid D \subseteq E  \mbox{ dependent}, |D|=r \}, \\
  J &= \prod \{ J_B \mid B \in \mathcal{B}(M) \}.
\end{align*}
In particular, $J$ is a principal ideal.
It follows that $M$ is representable (over some field $\bF$) iff $\det(A_S) \notin \sqrt{I}$ for all $S \subseteq E$ basis.
The ideal $I$ can be replaced by the saturation
\[
\widetilde{I} \coloneqq I : \left(\prod_{B \subseteq E \text{ basis}}\det(A_B)\right)^\infty = I : \det(A_{B_1})^\infty : \cdots : \det(A_{B_b})^\infty \mbox{.}
\]
Then $M$ is representable iff $1 \notin \widetilde{I}$.
For the Gröbner basis computations over $\Z$ we used \textsc{Singular} \cite{Singular412} from within the GAP package $\mathtt{ZariskiFrames}$ \cite{ZariskiFrames}, which is part of the $\mathtt{CAP/homalg}$ project \cite{homalg-project,BL,GPSSyntax}.

We used a more efficient approach which does not involve working over $\AA^{rn+1}_\Z$ but fixes certain values of the matrix $A$ to $0$ or $1$ as described in~\cite[p. 184]{Oxley2011}.
Firstly, we choose a basis $B\in \mathcal{B}(M)$ and fix the corresponding submatrix $A_B$ to be the unit matrix.
Without loss of generality we can assume $B=\{1,\dots,r\}$.
Secondly, we consider the fundamental circuits with respect to this basis $B$, i.\ e.\ for each $k\in E\setminus B$ let $C(k,B)$ be the unique circuit of the matroid $M$ contained in $B\cup k$.
The entries of $A$ in the column $k\in E\setminus B$ which do not appear in $C(k,B)$ can be fixed to $0$.
Lastly, the first nonzero entry in every column and the first nonzero entry in every row of $A$ can be taken as $1$ by column and row scaling respectively.
We have added this algorithm to $\mathtt{alcove}$ \cite{alcove}.

For another approach to the rational moduli space cf.~\cite{Cun11}.

\section{Proof of \Cref{thm:terao}} \label{sec:proof}

If a matroid has an atom which is contained in many coatoms or conversely a coatom which contains many atoms any realization satisfies Terao's conjecture.
This statement will be a crucial ingredient in the proof of ~\Cref{thm:terao}.
To formalize it we make the following definition.

\begin{defn}\label{def:balanced}
Let $M$ be a simple matroid of rank $3$ and assume $\chi_M(t)=(t-1)(t-a)(t-b)$ for some integers $a,b\in\Z$ such that $a\le b$.
\begin{itemize}
\item We call $M$ \textbf{atom balanced} if each atom is contained in at most $a$-many coatoms.
\item We call $M$ \textbf{coatom balanced} if each coatom contains strictly less than $a$-many atoms.
\item If $M$ is both atom and coatom balanced we call it \textbf{strongly balanced}.
\end{itemize}
\end{defn}

The importance of balancedness in our context stems from the next proposition.

\begin{prop}\label{prop:balancedness}
Let $M$ be a simple matroid of rank $3$ and assume $\chi_M(t)=(t-1)(t-a)(t-b)$ for some integers $a,b\in\Z$ such that $a\le b$.
If $M$ is \emph{not} strongly balanced then the freeness of any arrangement of hyperplanes representing $M$ can be decided combinatorially.
These representations therefore satisfy Terao's freeness conjecture.
\end{prop}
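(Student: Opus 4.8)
The plan is to show that under the hypothesis ``not strongly balanced'' the matroid $M$ is already supersolvable or lies in $\mathcal{DFM}_3$. Since both of these properties are manifestly combinatorial and, for every representing arrangement over any field, imply freeness (supersolvability by Jambu--Terao \cite{JT84}, and membership in $\mathcal{DFM}_3$ by Abe's division theorem \cite{Abe16}, noting that the reduced contraction of a rank $3$ matroid has rank $\le 2$ and is hence free over every field), this gives the assertion. Thus everything is reduced to a purely combinatorial statement about $M$.

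I would first dispose of the case that $M$ is \emph{not coatom balanced}, say some coatom $C$ has $|C|\ge a$. If $C$ meets every other rank $2$ flat then $C$ is a modular coatom and $M\in\mathcal{SSM}_3$. Otherwise pick a coatom $D$ with $C\cap D=\emptyset$ and any atom $x\in D$: for each $y\in C$ the pair $\{x,y\}$ lies in a coatom, these coatoms are pairwise distinct (two of them agreeing would contain two atoms of $C$ and hence equal $C$, which is impossible since $x\notin C$) and none of them equals $D$; thus $x$ lies in at least $|C|+1\ge a+1$ coatoms, so $M$ is not atom balanced. Hence it suffices to treat the case that $M$ is \emph{not atom balanced}.

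So assume some atom $H$ is contained in $\ell\ge a+1$ coatoms. Then $M^H$ is a rank $2$ matroid on $\ell$ atoms, so $\chi_{M^H}(t)=(t-1)\bigl(t-(\ell-1)\bigr)$, and $\chi_{M^H}\mid\chi_M$ exactly when $\ell\in\{a+1,b+1\}$, in which case $M\in\mathcal{DFM}_3$ and we are done. The core of the argument is therefore the dichotomy: if $\ell\notin\{a+1,b+1\}$ then $M$ has a modular coatom. To set it up, observe that the coatoms through $H$ partition the remaining $n-1=a+b$ atoms into $\ell$ blocks of sizes $u_1,\dots,u_\ell\ge1$; that $b_2:=\sum_{C}(|C|-1)=(n-1)+ab$ by \eqref{eq:chiM}; that $\sum_{C\ni H}(|C|-1)=n-1$, whence $\sum_{C\not\ni H}(|C|-1)=ab$; and that every coatom not containing $H$ is a partial transversal of the block partition (it meets each block in at most one atom, since sharing two atoms with a coatom through $H$ would force equality). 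Combining $ab=\sum_{C\not\ni H}(|C|-1)\le\sum_{C\not\ni H}\binom{|C|}{2}=\binom n2-\sum_i\binom{u_i+1}{2}$ already yields the numerical constraint $\sum_i u_i^2\le a^2+b^2$, and the finer analysis, tracking which blocks get joined by transverse coatoms and with what multiplicities, is what I expect to force the existence of a rank $2$ flat meeting all others.

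The hard part is exactly this last dichotomy: the coarse inequality $\sum_i u_i^2\le a^2+b^2$ is necessary but far from sufficient to pin down $\ell$, so one has to exploit the incidence geometry of the transverse coatoms and run a somewhat delicate case distinction on $\ell$ relative to $a$ and $b$ (the extreme cases $\ell=a+b$, where all coatoms through $H$ have size $2$, and $\ell$ just above $a$, being the most instructive test cases, including ruling out parameter combinations that are not realizable by any matroid at all). Once the dichotomy is in place the conclusion is immediate: $M$ is supersolvable or divisionally free, hence every representing arrangement is free irrespective of the field, and Terao's freeness conjecture holds for all of them.
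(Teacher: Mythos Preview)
Your strategy misreads the proposition. The statement does not assert that every representation of $M$ is free; it asserts that freeness is \emph{decidable combinatorially}---i.e., either every representation is free or every representation is non-free, and which of the two holds is determined by $M$ alone. Your plan tries to force $M$ into $\mathcal{SSM}_3\cup\mathcal{DFM}_3$, hence to conclude that every representation is free; but that conclusion is simply false in the situation at hand.

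Concretely, the paper's proof of the atom-unbalanced case cites \cite[Theorem~1.1 and Corollary~1.2]{Abe14}, which give an \emph{if and only if}: for a rank $3$ arrangement with $\chi=(t-1)(t-a)(t-b)$ and a hyperplane $H$ lying on $\ell>a$ coatoms, the arrangement is free precisely when $\ell\in\{a+1,b+1\}$. So when $\ell\notin\{a+1,b+1\}$ every representation is \emph{non}-free, and that is what ``combinatorially decidable'' means here. Your proposed dichotomy ``if $\ell\notin\{a+1,b+1\}$ then $M$ has a modular coatom'' is therefore impossible for representable $M$: a modular coatom makes $M$ supersolvable, hence every representation free, contradicting Abe's result. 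The ``hard part'' you sketch is not merely hard---it is false, and the numerical inequalities you set up cannot rescue it.

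Your reduction from ``not coatom balanced'' to ``supersolvable or not atom balanced'' is correct and is essentially the content of \cite[Lemma~2.10]{ACKN16}, which the paper cites for exactly this step. What is missing is the non-freeness half of the atom-unbalanced case, and that requires the full force of Abe's criterion rather than the one-sided divisional-freeness argument.
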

\begin{proof}
To begin assume that $M$ is not atom balanced for some atom $A$ which is contained in $n_{M,A}$ many coatoms with $n_{M,A} > a$.
Then, Theorem 1.1 and Corollary 1.2 in~\cite{Abe14} show that any representation of $M$ is free if and only if $n_{M,A}\in \{a+1,b+1\}$.

Instead assume that $M$ is not coatom balanced.
In this case, Lemma 2.10 in~\cite{ACKN16} shows that $M$ cannot be atom balanced either which finishes the proof by the first part.
\end{proof}

Now we have all ingredients to prove \cref{thm:terao}.

\begin{proof}[Proof of \cref{thm:terao}]
It suffices to check Terao's freeness conjecture for all representations of matroids of size $14$ which do not fall into any of the following classes of arrangements for which Terao's conjecture is known to be true:
\begin{itemize}
\item If the characteristic polynomial of the arrangement is not integrally splitting the arrangement is combinatorially nonfree by Terao's Factorization Theorem~\cite{Ter81}.
\item Representations of nonstrongly balanced simple rank $3$ matroids satisfy Terao's conjecture by \cref{prop:balancedness}.
\item Any representation of an inductively free matroid is a free arrangement~\cite{Ter80}.
\item If a matroid has a unique representation over the integers\footnote{i.e., the moduli space $\Spec R / \widetilde{I} \to \Spec \Z$ of representations is $\Spec \mathbb{F}_p  \to \Spec \Z$, a singleton.} it trivially satisfies Terao's conjecture.
\end{itemize}
Querying the database \cite{matroids_split} there are
\begin{itemize}
  \item $783280$ rank $3$ matroids of size $14$ with integrally splitting characteristic polynomial,
  \item $1574$ thereof are representable over some field,
  \item $174$ thereof are not inductively free,
  \item $64$ thereof are strongly balanced.
\end{itemize}
All of these $64$ remaining matroids have $\Spec \mathbb{F}_5$ as their moduli space, i.e., each of them is only representable over fields of characteristic $5$ and any such representation is equivalent to a unique representation over $\mathbb{F}_5$; hence they are all irrelevant for Terao's freeness conjecture.
This completes the proof.
\end{proof}

\begin{rmrk}
The situation of matroids of size $14$ is surprisingly simple in that respect.
This is not the case for matroids of smaller size since there are $9$ matroids which avoid all of the above classes and exhibit a nontrivial moduli space of representations (among them the example of a free but not rigid arrangement of size $13$ described in~\cite{ACKN16}).
We will describe their moduli spaces over $\Spec \Z$ and the nonfree locus therein in a subsequent article \cite{BK} which will establish Terao's conjecture for rank $3$ arrangements with up to $14$ hyperplanes in any characteristic.
\end{rmrk}

\newpage

\appendix

\section{Rooted Trees} \label{sec:trees}

In this Appendix we discuss rooted trees and give simple examples for their use as a tool to iterate over desired sets.
Instead of the classical definition of rooted trees we use an alternative mathematical model of rooted trees in which one can easily interpret the data structure of tree-iterators and their evaluations which we introduce in \Cref{sec:iterators}.
Expressed in this model, the (parallelized) evaluation of tree-iterators (Algorithm~\ref{algo:peri}) can then be understood as a limiting process.

For the design of our algorithms we represent a finite \textbf{rooted forest} (or set of \textbf{rooted trees}) as a finite sequence of the form
\[
  T_\bullet: T_0 \xleftarrow{\phi_1} T_1 \xleftarrow{\phi_2} T_2 \xleftarrow{\phi_3} \cdots \xleftarrow{\phi_d} T_d \mbox{,}
\]
where $T_i$ is the finite set of \textbf{vertices} of \textbf{depth} $i$.
We call $d$ the \textbf{depth} of $T_\bullet$.
In particular, $T_0$ is the set of \textbf{roots}.
We denote the set of \textbf{leaves} of $T_\bullet$ by $T := \lim T_\bullet$, which is the set of non-images in $T_\bullet$.\footnote{The notation $\lim T_\bullet$ can be justified as follows: $T_\bullet$ is a sequential inverse system in the category of finite sets with the set of leaves as its limiting object.}
As mentioned in the introduction we then say that $T_\bullet$ \textbf{classifies} $T$.

A forest of rooted trees can be understood as a single rooted tree by adding a constant map $T_{-1} := \{*\} \leftarrow T_0$ and then increase all indices by $1$.
\begin{conv}
So without loss of generality we will henceforth assume $T_\bullet$ to be a rooted tree of depth $d$, i.e., $T_0 = \{*\}$ a singleton.
\end{conv}

If all maps in the inverse system are surjective then the natural map $T_d \leftarrow T$ (which is part of the limit datum) is bijective and the set leaves $T = T_d$.
In this case all leaves have the same depth $n$ and we call $T_\bullet$ \textbf{uniform (of depth $d$)}.

More generally, we call a tree $T_\bullet$ \textbf{locally uniform} if each vertex that has a leaf as a child only has leaves as children, i.e., if for each vertex $v$ of depth $i$ the following holds: $\phi_i^{-1}(v) \cap T \neq \emptyset \implies \phi_i^{-1}(v) \subseteq T$.

%We call a vertex in $T_\bullet$ of depth $i$ \textbf{redundant} if it has a unique preimage under $\phi_{i+1}$.
%We call $T_\bullet$ \textbf{irreducible} if none of its vertices is redundant.
%The tree $T_\bullet$ gives rise to a irreducible one of depth at most $d$ by successively contracting all edges between redundant vertices and their unique preimages.
%We call the depth $d$ \textbf{essential} if the reduction of $T_\bullet$ is again of depth $d$.
%Conversely, $T_\bullet$ gives rise to a (potentially reducible) one of uniform depth $d$ by copying the leaves of lower depth into the higher layers and extending the maps $\phi_i$ accordingly.

Many inequivalent representations of such rooted trees classifying the same set $T$ might exist:
\Cref{exmp:matched,exmp:magma} are inequivalent families of rooted trees $T^{(n)}_\bullet$ (indexed by a natural number $n$) classifying the same family of sets $T^n$ of cardinality $C_n$, the $n$-th Catalan number.

\begin{exmp}[Matched parentheses] \label{exmp:matched}
  For $i \in \N$ denote by $T_i$ the set containing $i+1$  pairs of correctly matched parentheses:
  \[
    T_0 := \{ () \}, T_1 := \{ (()), ()() \}, T_2 := \{ ()(()), (()()), ((())), (())(), ()()() \}, \ldots
  \]
  Define $T_{i-1} \xleftarrow{\phi_i} T_i$ to be the map removing the left most\footnote{or right most, ...} pair of parentheses containing no other ones.
  For a fixed $n \in \N$ the sequence $T_\bullet: T_0 \xleftarrow{\phi_1} T_1 \xleftarrow{\phi_2} T_2 \xleftarrow{\phi_3} \cdots \xleftarrow{\phi_{n-1}} T_{n-1}$ is a finite rooted tree of uniform depth $n-1$. %essential depth $n-1$.
  The cardinality of the set of leaves $\lim T_\bullet = T_{n-1}$ is the $n$-th Catalan number\footnote{Cf.~(\url{http://oeis.org/A000108}).} $C_n = \binom{2n}{n} -  \binom{2n}{n+1} = \frac{1}{n+1}\binom{2n}{n}$.
\end{exmp}

\begin{exmp}[Magma evaluation] \label{exmp:magma}
  For $n \in \N_{>0}$ denote by $T^{(n)}$ the set of all possible ways to evaluate the product of the sorted list of free generators of a free magma $M_n = \langle a_0,\ldots, a_n \rangle$ of rank $n+1$:
  \[
      \begin{array}{c|c|c|c}
      n & 1 & 2 & 3 \\
      \hline \hline
      M_n \rule{0em}{1.2em} & \langle a, b \rangle & \langle a, b, c \rangle & \langle a, b,c, d \rangle \\[0.3em]
      T^{(n)} &  {\color{gray} \{} ab {\color{gray} \}} & {\color{gray} \{} (ab)c, a(bc) {\color{gray} \}} & {\color{gray} \{} ((ab)c)d, (a(bc))d, (ab)(cd), a((bc)d), a(b(cd)) {\color{gray} \}}
    \end{array}
  \]
  The set $T_i^{(n)}$ for $i \in \N$ arises from $T^{(n)}$ by deleting all pairs of parentheses of depth higher than $i$.
  The maps $T^{(n)}_{i-1} \xleftarrow{\phi_i} T^{(n)}_i$ are evident.
  \[
      \begin{array}{c|c|c|c}
      n & 1 & 2 & 3 \\
      \hline \hline
      T^{(n)}_0 \rule{0em}{1.2em} & {\color{gray} \{} ab {\color{gray} \}} & {\color{gray} \{} {\color{gray} abc} {\color{gray} \}} & {\color{gray} \{} {\color{gray} abcd} {\color{gray} \}} \\[0.3em]
      T^{(n)}_1 & & {\color{gray} \{} (ab)c, a(bc) {\color{gray} \}} & {\color{gray} \{} {\color{gray} (abc)d}, (ab)(cd), {\color{gray} a(bcd)} {\color{gray} \}} \\[0.3em]
      T^{(n)}_2 & & & {\color{gray} \{} ((ab)c)d, (a(bc))d, a((bc)d), a(b(cd)) {\color{gray} \}}
    \end{array}
  \]
  The gray entries in the above table are the internal nodes of the rooted tree $T^{(n)}_\bullet$.
  %Note that all these rooted trees are irreducible.
  The latter is not locally uniform for $n\geq 3$.
  The set of leaves $\lim T^{(n)}_\bullet$ coincides with $T^{(n)}$, by construction.
  The cardinality of $T^{(n)}$ is again the $n$-th Catalan number $C_n$.
\end{exmp}

In the following example the sets of leaves are themselves sets of rooted trees.
We hope this does not cause confusion.

%\begin{exmp}[Full binary trees] \label{exmp:binary}
%  For a fixed $n \in \N$ denote by $T_i$ the set of full binary trees\footnote{i.e., rooted trees where each vertex has either two or no children.} of depth $i \in \N$ with up to $n+1$ leaves.
%  Define $T_{i-1} \xleftarrow{\phi} T_i$ to be the map removing from each binary tree the children of highest depth $i$.
%  Note that the rooted tree $T_\bullet: T_0 \xleftarrow{\phi_1} T_1 \xleftarrow{\phi_2} T_2 \xleftarrow{\phi_3} \cdots \xleftarrow{\phi_n} T_n$ is not locally uniform for $n \geq 3$. %reducible for $n>0$
%  Then the set of leaves $T \equiv \lim T_\bullet$ is nothing but the set of full binary trees with $n+1$ leaves.
%  \begin{center}
%  \begin{minipage}{0.6\linewidth}
%  \includegraphics[width=\linewidth]{Catalan_number_binary_tree_example.png}
%  \captionof{figure}{Full binary trees with $3+1=4$ leaves}
%  \end{minipage}
%  \end{center}
%  Since this is basically the previous example we again get $n$-th Catalan number $C_n$ as the cardinality of $T$.
%\end{exmp}

\begin{exmp}[Phylogenetic trees with labeled leaves] \label{exmp:phylo}
  A phylogenetic tree is a labeled  rooted tree. %irreducible rooted tree.
  A phylogenetic tree with $n \in \N_{>0}$ leaves corresponds to a total partition of $n$.
  Let $T^{(n)}$ be the set of phylogenetic trees with $n$ (labeled) leaves.\footnote{Cf.~(\url{http://oeis.org/A000311}).}
  {\scriptsize
  \[
      \begin{array}{c|c|c|c}
      n & 1 & 2 & 3 \\
      \hline \hline
      T^{(n)} \rule{0em}{1.7em} & {\color{gray} \Big\{} \{1\} {\color{gray} \Big\}} & {\color{gray} \Big\{} \{\{1\},\{2\}\} {\color{gray} \Big\}} & {\color{gray} \Big\{} \{\{1\},\{2\},\{3\}\};  \{\{1\},\{\{2\},\{3\}\}\}; \{\{2\},\{\{1\},\{3\}\}\}; \{\{3\},\{\{1\},\{2\}\}\} {\color{gray} \Big\}}
    \end{array}
  \]
  }
  Truncating a phylogenetic tree at depth $i$ means to contract all edges below depth $i$ and multi-label the new leaves at depth $i$ by all their child leaves.
  For $i \in \N$ denote by $T^{(n)}_i$ the set of all truncations of trees in $T^{(n)}$ at depth $i$.
  Again, all maps $T^{(n)}_{i-1} \xleftarrow{\phi} T^{(n)}_i$ are evident.
  {\small
  \[
      \begin{array}{c|c|c|c}
      n & 1 & 2 & 3 \\
      \hline \hline
      T^{(n)}_0 \rule{0em}{1.7em} & {\color{gray} \Big\{} \{1\} {\color{gray} \Big\}} & {\color{gray} \Big\{} {\color{gray} \{1,2\}} {\color{gray} \Big\}} & {\color{gray} \Big\{} {\color{gray} \{1,2,3\}} {\color{gray} \Big\}}  \\
      T^{(n)}_1 & & {\color{gray} \Big\{} \{\{1\},\{2\}\} {\color{gray} \Big\}} & {\color{gray} \Big\{} \{\{1\},\{2\},\{3\}\}; {\color{gray} \{\{1\},\{2,3\}\}}; {\color{gray} \{\{2\},\{1,3\}\}}; {\color{gray} \{\{3\},\{1,2\}\}} {\color{gray} \Big\}}  \\
      T^{(n)}_2 & & & {\color{gray} \Big\{} \{\{1\},\{\{2\},\{3\}\}\}; \{\{2\},\{\{1\},\{3\}\}\}; \{\{3\},\{\{1\},\{2\}\}\} {\color{gray} \Big\}}
    \end{array}
  \]
  }
  The rooted tree $T^{(n)}_\bullet$ is not locally uniform for $n \geq 3$. %irreducible for $n \neq 1$
  The set of leaves $\lim T^{(n)}_\bullet$ coincides with $T^{(n)}$, by construction.
\end{exmp}

Factoring out symmetries of rooted trees again yields rooted trees:
\begin{rmrk}[Rooted trees of group orbits] \label{exmp:orbit}
  Let $G$ be a group.
  A rooted tree $T_\bullet$ is called a \textbf{rooted $G$-tree} if each $T_i$ is a $G$-set and all maps $\phi_i$ are $G$-equivariant.
  A rooted $G$-tree $\lim T_\bullet$ induces a rooted tree of orbits $T_\bullet / G$.
  Furthermore $\lim (T_\bullet/G) = \lim(T_\bullet)/G$, naturally.
\end{rmrk}

\begin{exmp}[Phylogenetic trees with nonlabeled leaves]
  Applying \Cref{exmp:orbit} to the previous \Cref{exmp:phylo} yields a rooted tree classifying phylogenetic trees with unlabeled leaves.
  More precisely, the action of $\Sym(n)$ on $\{1,\ldots,n\}$ turns the rooted tree $T_\bullet$ in \Cref{exmp:phylo} into a rooted $\Sym(n)$-tree.
  The rooted tree of orbits $T_\bullet/\Sym(n)$ then classifies $T/\Sym(n)$ which is the set of phylogenetic trees with unlabeled leaves.\footnote{Cf.~(\url{http://oeis.org/A000669}).}
\end{exmp}

Our primary family of examples of rooted tree was discussed in \Cref{sec:matroid_iter}.
They have rank $3$ matroids as their set of leaves.

\section{Recursive Iterators and Tree-Iterators} \label{sec:iterators}

In this appendix we introduce the data structure of so-called tree-iterators, which we use to recursively iterate over the vertices of a rooted tree.
This data structure is a central ingredient of all algorithms.

\begin{defn} \label{defn:recursive_iterator}
  Let $T$ be a set.
  \begin{itemize}
  \item A \textbf{recursive iterator $t$ within $T$} is an iterator which upon popping produces either $\mathtt{Next}(t) = \mathtt{fail} \notin T$ or a \textbf{child} $\mathtt{Next}(t)$ which is either
    \begin{enumerate}
      \item a new recursive iterator within $T$, or
      \item an element of $T$.
    \end{enumerate}
    If the pop result $\mathtt{Next}(t)$ is $\mathtt{fail}$ then any subsequent pop result of $t$ remains $\mathtt{fail}$.
    We call $T$ the \textbf{ambient set} of $t$.
  \item A \textbf{full evaluation} of a recursive iterator recursively pops all recursive iterators until each of them pops $\mathtt{fail}$.
  \item If $t$ is a recursive iterator then the subset of elements $T(t) \subseteq T$ produced upon full evaluation is called the \textbf{set of leaves of $t$ in $T$}.
    We say that $t$ \textbf{classifies} $T(t) \subseteq T$.
  \item A recursive iterator is called \textbf{locally uniform} if every descendant either pops recursive iterators or leaves, exclusively (if not $\mathtt{fail}$).
  \item A recursive iterator $t$ within $T$ is called a \textbf{tree-iterator} if upon full evaluation each element of $T(t) \subseteq T$ is the pop result of exactly one descendant of $t$.
  \end{itemize}
\end{defn}

In order to iterate over a tree $T_\bullet$ with set of leaves $T = \lim T_\bullet$ it is somewhere between convenient and almost unavoidable to construct a tree-iterator $t$ within $T$ which might iterate over a larger tree $T^t_\bullet$ having a set of leaves $T^t = \lim T^t_\bullet$ which is larger than $T(t)$, i.e., with dead ends being all tree-iterators which are descendants of $t$ but have no own descendants.
In our application to the classification of simple rank $3$ matroids the dead ends are the admissible partial $2$-partitions which cannot be completed to a proper $2$-partition (cf.~\Cref{defn:admissible}).

\begin{rmrk} \label{rmrk:B2}
  A tree-iterator $t$ within $T$ or any of its descendants can be understood as a vertex of the rooted tree
    \[
      T^t_\bullet: T^t_0 \coloneqq \{t\} \xleftarrow{\phi^t_1} T^t_1 \xleftarrow{\phi^t_2} T^t_2 \xleftarrow{\phi^t_3} \cdots \xleftarrow{\phi^t_d} T^t_d \mbox{,}
    \]
  inductively described as follows:
  Let $t'$ be any descendant of $t$ interpreted as an element $t' \in T^t_i$.
  If $\mathtt{Next}(t') = \mathtt{fail}$ then $t'$ has no (further) preimages under $\phi_{i+1}$.
  Otherwise each evaluation $\mathtt{Next}(t') \in T^t_{i+1}$, which is a preimage of $t'$ under $\phi^t_{i+1}$.
  We call $T^t_\bullet$ the \textbf{tree associated to $t$}.
  Its set of leaves $T^t \coloneqq \lim T^t_\bullet$ is the union of $T(t) \subseteq T$ and the set of all tree-iterators which are descendants of $t$ but have no own descendants.
\end{rmrk}

\begin{rmrk} \label{rmrk:B3}
  Given a tree-iterator $t$ within $T$ with corresponding tree $T^t_\bullet$ as in \Cref{rmrk:B2} we define the subtree
  \[
      T_\bullet: T_0 \xleftarrow{\phi_1} T_1 \xleftarrow{\phi_2} T_2 \xleftarrow{\phi_3} \cdots \xleftarrow{\phi_d} T_d \mbox{.}
    \]
  with $\lim T_\bullet = T(t) \subseteq T$, i.e., the subtree $T_\bullet \subseteq T^t_\bullet$ consisting of the leaves in $T(t)$ and all their predecessors.
  We call $T_\bullet$ the \textbf{tree of relevant leaves of $t$}.
  
  In order to iterate over a tree $T_\bullet$ with set of leaves $T = \lim T_\bullet$ we use the freedom to construct a tree-iterator $t$ within $T$ having $T_\bullet$ as its tree of relevant leaves, even though its associated tree $T^t_\bullet$ might be considerably larger.
\end{rmrk}

\section{Parallel Evaluation of Recursive Iterators} \label{sec:parallel}

In this Appendix we describe the three algorithms
\begin{itemize}
  \item \textbf{ParallellyEvaluateRecursiveIterator} (Algorithm~\ref{algo:peri}),
  \item \textbf{EvaluateRecursiveIterator} (Algorithm~\ref{algo:eri}),
  \item \textbf{LeafIterator} (Algorithm~\ref{algo:leaf-iterator}),
\end{itemize}
which constitute our general parallelization scheme for recursive iterators.
They are independent of any specific recursive iterator (e.g., the one defined by \textbf{IteratorFromState} in Algorithm~\ref{algo:state}).
Furthermore, the recursive iterators can be implemented in classical sequential code, i.e., this organization requires no pre-knowledge in parallel programming in order to implement a recursive iterator and evaluate it in parallel.
We have implemented the three algorithms in the High-Performance-Computing (HPC) version\footnote{Since version 4.9.1 \GAP can be compiled with the option \verb+--enable-hpcgap+.} of \GAP 4.9.2 \cite{GAP492} as part of the \GAP-package $\mathtt{ParallelizedIterators}$ \cite{ParallelizedIterators}.

The combination of these three algorithms takes a recursive iterator $t$ (within $T$) as input and returns an iterator $\ell(t)$ which iterates over the set of leaves $T(t) \subseteq T$.
We call $\ell(t)$ the \textbf{leaf-iterator} associated to $t$.
If $t$ is a tree-iterator then $\ell(t)$ produces no duplicates.

We now briefly explain the role of each of the three algorithms and the way they interact:
Algorithm~\ref{algo:leaf-iterator} is executed in the main thread with a recursive iterator as input.
In the main application of this paper the input is the tree-iterator $t^{(m_k)}$ of all rank $3$ matroids of a given multiplicity vector $(m_k)$, constructed using Algorithm~\ref{algo:state}.
Algorithm~\ref{algo:leaf-iterator} then initializes a global FIFO $L$ of leaves and invokes Algorithm~\ref{algo:peri}.
The latter creates a shared priority queue $P$, launches as many workers (threads) as specified by the user, triggers Algorithm~\ref{algo:eri} in each of them, and then terminates.

The shared\footnote{Implementations of priority queues exist both for shared memory and distributed operation. We have chosen to use a simple shared memory implementation, as contention for our workloads is very low, so we do not have to worry about the priority queue becoming a serialization bottleneck.
} priority queue
%\footnote{We use a priority queue to schedule units of work in our algorithm. Unlike regular queues, having a computable priority (which can be drawn from any partially or totally ordered set) to govern order of processing allows us to easily define for each job when it is to be executed in relation to other jobs without requiring a more complex scheduler, as long as we can compute a priority as a function from the set of jobs to the set of priorities that encodes the necessary relation between jobs.}
stores the list of recursive iterators still to be searched along with their priority, which in our case is the depth at which they were created.
The instance of Algorithm~\ref{algo:eri} running in each thread asks for the highest priority iterator $t'$ in the priority queue $P$ and evaluates $t'' \coloneqq \mathtt{Next}(t')$.
If $t''$ is an element of $T$ then $t''$ is added to the FIFO $L$ of leaves and $t'$ is returned to $P$ with the same priority.
If $t''$ is again an iterator then $t'$ and $t''$ are returned to $P$; $t'$ is returned with the same priority and $t''$ with the priority of $t'$ increased by one.
Finally if $t'' = \mathtt{fail}$ then nothing is done.
After any of the three actions the instance of Algorithm~\ref{algo:eri} starts over again.
In particular, our use of a priority queue avoids the need for a central process supervising the workers.

\bigskip
\begin{algorithm}[H]
\SetKwIF{If}{ElseIf}{Else}{if}{then}{elif}{else}{}%
\DontPrintSemicolon
\SetKwProg{ParallellyEvaluateRecursiveIterator}{ParallellyEvaluateRecursiveIterator}{}{}
\LinesNotNumbered
\KwIn{
\begin{itemize}
  \item A recursive iterator $t$
  \item a number $n \in \N_{>0}$ of workers
  \item a global FIFO $L = ()$, accessible by the subprocesses of the workers
\end{itemize}
}
\KwOut{no return value; the side effect is to fill the FIFO $L$ with the leaves in $T(t)$}
\ParallellyEvaluateRecursiveIterator(){($t$, $n$, $L$)}{
 \nl  Initialize a farm $w$ of $n$ workers $w_1,\ldots, w_n$\;
 \nl  Initialize a \emph{shared} priority queue $P$ of iterators and set $P = ()$\;
 \nl  Initialize a \emph{shared} counter $j$ of jobs in process and pending and set $j = 1$\;
 \nl  Initialize a \emph{shared} semaphore $s \geq 0$ and set $s = 0$\;
 \nl  $P := ( (t,0) )$\;
 \nl  \For{$i=1,\ldots, n$}{
 \nl  $\mathtt{EvaluateRecursiveIterator}(n,L,P,s,j)$ within worker $w_i$\;
 }
 \nl  $\mathtt{SignalSemaphore}(s)$\;
 \nl  \Return{none}\;
 }
\caption{ParallellyEvaluateRecursiveIterator \label{algo:peri}}
\end{algorithm}

Algorithm~\ref{algo:peri} gets as input a recursive iterator, a number $n$ of workers, and a global FIFO $L$.
It initializes a shared priority queue $P$, adds $P$ as the only job with priority $0$, triggers $n$ workers (running in threads) each executing Algorithm~\ref{algo:eri}.
If a worker produces a leaf it writes it to the FIFO $L$.

\bigskip
\begin{algorithm}[H]
\SetKwIF{If}{ElseIf}{Else}{if}{then}{elif}{else}{}%
\DontPrintSemicolon
\SetKwProg{EvaluateRecursiveIterator}{EvaluateRecursiveIterator}{}{}
\LinesNotNumbered
\KwIn{
\begin{itemize}
  \item a number $n \in \N_{>0}$ of all workers
  \item a global FIFO $L = ()$, accessible by the other $n-1$ workers
  \item a shared priority queue $P$
  \item a shared semaphore $s$
  \item a shared counter $j$ of jobs in process or pending
\end{itemize}
}
\KwOut{no return value; the side effect is to evaluate the recursive iterators in the priority queue which get processed by this worker and save the leaves in the FIFO $L$}
\EvaluateRecursiveIterator(){($n$, $L$, $P$, $s$, $j$)}{
 \nl  \While{$\mathtt{true}$}{
 \nl  $\mathtt{WaitSemaphore}(s)$ \label{line:sem} \tcp*{wait until the semaphore $s>0$}
 \nl  \If(\tcp*[f]{if the priority queue is empty}){$P = ()$}{
 \nl  \Return{none} \label{line:return_none} \tcp*{terminate the worker}
 }
 \nl  $(t_i,p_{t_i}) :=\mathtt{Pop}(P)$ \tcp*{get the highest priority job from $P$}
 \nl  $r_i \coloneqq \mathtt{Next}(t_i)$ \label{line:next} \tcp*{pop the recursive iterator $t_i$}
 \nl  \uIf(\tcp*[f]{the result $r_i$ is a leaf}){$r_i \in T$}{
 \nl  $\mathtt{Add}(L,r_i)$ \label{line:AddToFIFO} \tcp*{add the leaf $r_i$ to the FIFO $L$ of leaves}
 \nl  $\mathtt{Add}(P, (t_i,p_{t_i}))$ \label{line:Add} \tcp*{return the recursive iterator $t_i$ back to $P$}
 }
 \nl  \uElseIf(\tcp*[f]{the result $r_i$ is a recursive iterator}){$r_i \neq \mathtt{fail}$}{
 \nl  $\mathtt{Add}(P, (t_i,p_{t_i}))$ \tcp*{return the recursive iterator $t_i$ back to $P$}
 \nl  $\mathtt{SignalSemaphore}(s)$ \tcp*{increase the semaphore by 1}
 \nl  $\mathtt{Add}(P, (r_i,p_{t_i}+1))$ \tcp*{add the new recursive operator $r_i$ to $P$}
 \nl  $\mathtt{SignalSemaphore}(s)$ \tcp*{increase the semaphore by 1}
 \nl  $j := j + 1$ \tcp*{increase the job counter $j$ by 1}
 }
 \nl  \Else(\tcp*[f]{the result $r_i$ is $\mathtt{fail}$}){
 \nl  $j := j - 1$ \tcp*{decrease the job counter $j$ by 1}
 }
 \nl  \If(\tcp*[f]{no recursive iterator is in process or pending}){$j=0$}{
 \nl  $\mathtt{Add}(L, \mathtt{fail})$ \tcp*{add $\mathtt{fail}$ to the FIFO $L$ of leaves}
 \nl  \For(\tcp*[f]{for each worker}){$i=1,\ldots, n$}{
 \nl  $\mathtt{SignalSemaphore}(s)$ \tcp*{increase the semaphore by 1}
 \tcc{the first worker who realizes that there are no jobs left writes $\mathtt{fail}$ in the FIFO $L$ of leaves and increases the semaphore by $n$ to enable all workers to bypass line~\ref{line:sem}, reach line~\ref{line:return_none} and terminate}
 }
 }
 }
 }
\caption{EvaluateRecursiveIterator \label{algo:eri}}
\end{algorithm}

Algorithm~\ref{algo:eri} is the one executed by each worker.
It gets the global state consisting of the number $n$ of workers, the FIFO $L$ of leaves, the priority queue $P$, the semaphore $s$, and the counter $j$ of jobs in process or pending.
A semaphore is a globally shared variable with nonnegative integers as admissible values, which we use to tell workers when to start looking for jobs to process.
The command $\mathtt{SignalSemaphore}(s)$ increases $s$ by $1$.
The command $\mathtt{WaitSemaphore}(s)$ halts until $s>0$ and then decreases $s$ by $1$.

Algorithm~\ref{algo:eri} could be refined for \emph{locally uniform} recursive iterators as follows: Whenever a recursive iterator starts to evaluate leaves then do not add it back to the priority queue (line~\ref{line:Add}) but evaluate it fully (by repeating lines~\ref{line:next} and \ref{line:AddToFIFO}).

In Algorithms~\ref{algo:peri} and \ref{algo:eri} the FIFO $L$ can be equipped with a capacity $k$.
Once this capacity is reached line~\ref{line:AddToFIFO} of Algorithm~\ref{algo:eri} will automatically pause the worker until some other process, e.g. Algorithm~\ref{algo:leaf-iterator}, pops the FIFO $L$.

Algorithm~\ref{algo:leaf-iterator} turns a recursive iterator $t$ within $T$ into a single iterator $\ell(t)$ which enumerates $T(t) \subseteq T$.

\bigskip
\begin{algorithm}[H]
\SetKwIF{If}{ElseIf}{Else}{if}{then}{elif}{else}{}%
\DontPrintSemicolon
\SetKwProg{LeafIterator}{LeafIterator}{}{}
\LinesNotNumbered
\KwIn{
\begin{itemize}
  \item A recursive iterator $t$
  \item a number $n \in \N_{>0}$  of workers
\end{itemize}
}
\KwOut{The associated leaf-iterator $\ell(t)$}
\LeafIterator(){($t$, $n$, $k$)}{
 \nl Initialize a FIFO $L := ()$\;
 \nl Trigger $\mathbf{ParallellyEvaluateRecursiveIterator}(t,n,L)$\;
 \nl Initialize the leaf-iterator $\ell$:\;
 \nl \phantom{aa} Define $\mathtt{IsDone}(\ell)$ to check if first entry of $L$ is $\mathtt{fail}$\footnote{Recall, $\mathtt{fail} \notin T$.}\;
 \nl \phantom{aa} Define $\mathtt{Next}(\ell)$ to return the first entry of $L$ which is an element of $T(t)$\;
 \nl \Return{$\ell$}\;
 }
\caption{LeafIterator (Leaf-iterator of a recursive iterator) \label{algo:leaf-iterator}}
\end{algorithm}

\section{Why \HPCGAP?} \label{sec:WhyHPCGAP}

We list some advantages of our implementation in \HPCGAP:
\begin{enumerate}
  \item More threads can be added on the fly; they simply start to pull jobs from the priority queue (if nonempty);
  \item One can even notify single threads to terminate once they finish evaluating a recursive-iterator;
  \item \HPCGAP supports global shared memory and therefore allows us to use a simple and efficient shared memory implementation for priority queues, as described in Section~\ref{sec:iterators};
  \item \HPCGAP allows for objects to be moved efficiently from one thread to another by reassigning ownership of those objects to the new thread, rather than inefficiently performing a full structural copy or using serialization.
\end{enumerate}

The most obvious drawback of our implementation is the following:
The state of evaluation of a recursive iterator is defined by the priority queue (residing in a shared region) and by the iterators that are being evaluated in the threads.
  So if a thread dies or hangs\footnote{either manually terminated or due to an instability of \HPCGAP, which rarely happens in the current version} while evaluating a recursive-iterator then the latter (which was adopted by the thread from the priority queue) with all its leaves (e.g., matroids) are lost.
  In particular, it is impossible to terminate the running \HPCGAP process without losing the state of evaluation.
  
  A second drawback is that it is currently impossible to use a distributed computational model since in our implementation the state of evaluation of a recursive iterator can only be defined and managed by a single \HPCGAP process.

One way to avoid these drawbacks is to store the state of evaluation into a (temporary) database.
In particular \emph{all} yet nonfully evaluated recursive-iterators should be stored in the database, while those in process should be marked as such using a unique fingerprint of the evaluating process.
This allows a distributed access on the one side.
On the other side an iterator with a deadlock can be manually (or maybe even automatically by a watchdog) be freed for evaluation by other threads searching for jobs.

Our implementation performs best for recursive-iterators where the evaluation time of each produced iterator is considerably longer than the organizational overhead in  \HPCGAP caused by redefining regions, etc.

\section{Timings} \label{sec:timings}

It is worth noting that $97\%$ of the $404$ tree-iterators of the different multiplicity vectors for $n=13$ atoms can be evaluated in less than a day of CPU time.
For $n=14$ the corresponding number are still $93\%$ of $695$.

\begin{rmrk}
While processing all relevant multiplicity vectors is an ``embarrassingly parallel'' problem, the reader may have noticed that the parallel evaluation of a single tree-iterator corresponding to one such multiplicity vector is much more involved.
\end{rmrk}

The gain of the parallelized evaluation of tree-iterators of rank $3$ matroids with given multiplicity vector depends on the number $n$ of atoms.
The longest CPU time of an evaluation of a tree-iterator with $n=13$ atoms was that of the one with multiplicity vector $(m_3,m_4) = (18,4)$ which took $16.2$ CPU days but finished in $5.59$ days using $8$ workers, a factor of $2.9$.
The gain for $n=14$ was more significant:
The multiplicity vector with the largest number of matroids is $(m_2,m_3,m_4,m_5) = (14,9,5,2)$.
It generated $168352$ matroids ($45$ of them are representable) in about $22.8$ hours of CPU time, but finished in $112$ minutes using 24 workers, a factor of $12.2$.
The multiplicity vector with the longest CPU time for evaluating the tree-iterator is $(m_2,m_3,m_4,m_5) = (3,18,4,1)$.
It generated $34$ matroids (only one of them is representable) and took $495.7$ CPU days but finished in $74.3$ days using $8$ workers, a factor of $6.7$.

\bibliographystyle{myalpha}
\newcommand{\includebibliography}[1]{\bibliography{#1/a,#1/b,#1/c,#1/d,#1/e,#1/f,#1/g,#1/h,#1/i,#1/j,#1/k,#1/l,#1/m,#1/n,#1/o,#1/p,#1/q,#1/r,#1/s,#1/t,#1/u,#1/v,#1/w,#1/x,#1/y,#1/z}}
\def\cprime{$'$} \def\cprime{$'$} \def\cprime{$'$} \def\cprime{$'$}
  \def\cprime{$'$}
\providecommand{\bysame}{\leavevmode\hbox to3em{\hrulefill}\thinspace}
\providecommand{\MR}{\relax\ifhmode\unskip\space\fi MR }
% \MRhref is called by the amsart/book/proc definition of \MR.
\providecommand{\MRhref}[2]{%
  \href{http://www.ams.org/mathscinet-getitem?mr=#1}{#2}
}
\providecommand{\href}[2]{#2}

\end{document}

%%% Local Variables: 
%%% mode: latex
%%% TeX-master: t
%%% End: 